\documentclass[a4paper,12pt]{article}

\usepackage{amsmath,amsthm,amssymb,amscd}
\usepackage{pgfplots}
\usepackage{amssymb}
\usepackage{amsthm}
\usepackage{amsmath, amsfonts}
\usepackage{authblk}
\usepackage{epsfig}
\usepackage{amscd}
\usepackage{graphicx,wrapfig}
\usepackage{subcaption}
\usepackage[thinlines,thiklines]{easybmat}
\usepackage{nicematrix}
\usepackage{hyperref}
\usepackage{capt-of}
\usepackage{float}
\usepackage{stackrel}
\usepackage{tikz-cd}
\usepackage{xcolor}
\usepackage{fancybox}
\usepackage{float}
\def\natu           {\mathbb N}

\def\real		{\mathbb R}

\def\comp		{\mathbb C}

\def\F		{\mathbb F}

\def\R		{{\cal R}}
\def\L		{{\cal L}}
\def\M		{{\cal M}}

\def\c		{{\rm circ}}
\def\O		{{\cal O}}

\title{Periodicities in the Riordan arrays of polynomials over finite fields}

\author{Derek E. Bellamy, Eva N. Pflomm, Nikolai A. Krylov 
\thanks{Nikolai Krylov is supported by an AMS-Simons 
Research Enhancement Grant for Primarily Undergraduate 
Institution Faculty.}}
\affil{Siena University\\ Department of Mathematics and Statistics\\
515 Loudon Road, Loudonville NY 12211, USA\\
de26bell@siena.edu, en24pflo@siena.edu, nkrylov@siena.edu}

\date{}

\begin{document}

\newtheorem{theorem}{Theorem}
\newtheorem{lemma}[theorem]{Lemma}
\newtheorem{claim}[theorem]{Claim}
\newtheorem{corollary}[theorem]{Corollary}
\newtheorem{conj}[theorem]{Conjecture}
\newtheorem{prop}[theorem]{Proposition}
\newtheorem{note}{Note}
\theoremstyle{definition}
\newtheorem{definition}[theorem]{Definition}
\newtheorem{example}[theorem]{Example}
\numberwithin{equation}{section}

\textheight=22cm
\topmargin=-4mm

\maketitle

\begin{abstract}
We study periodicity properties of the 2-D $\bigl(p_1(t)/p_2(t),\, tp_3(t)\bigr)$ and 3-D 
$\bigl(p_1(t)/p_2(t),\, tp_3(t),\, p_4(t)\bigr)$ Riordan arrays over a finite field $\F_q$, 
where each $p_i(t)$ is a polynomial with $p_i(0)\neq 0$. We show that the columns of 
the 2-D Riordan array are eventually periodic sequences, where a circulant matrix generated 
by the coefficients of $p_3(t)$ determines the behavior of this periodicity as the 
column index grows indefinitely. Furthermore, we prove that the preperiodic 
column partial sums of the 2-D array are periodic, and present a family of the 
Riordan arrays for which such sequences of partial sums are identically zero. 
We also show that the layers of the 3-D Riordan array contain periodic orbits 
related to each other via powers of a circulant matrix generated by 
the coefficients of $p_4(t)$. 
\end{abstract}

\noindent {\it 2020 Mathematics Subject Classification: 05A15, 11B50, 11B83, 15B33.}\\
{\it Keywords: Riordan arrays, generating functions, linear recurring sequences, 
circulant matrices, partial sums.}

\tableofcontents

%%%%%%%%%%%%%%%%%%%%%%%%%%%
%%%%%%%%%%%%%%%%%%%%%%%%%%%
%%%%%%%%                          %%%%%%%%%%%
%%%%%%%%   Introduction    %%%%%%%%%%%
%%%%%%%%                          %%%%%%%%%%%
%%%%%%%%%%%%%%%%%%%%%%%%%%%
%%%%%%%%%%%%%%%%%%%%%%%%%%%

\section{Introduction}

Consider the rational function $1/(1-2t)$ expanded into a 
formal power series over the reals
$$
\frac{1}{1-2t} = \sum\limits_{i=0}^{\infty} 2^it^i \in\real[[t]].
$$ 
Coefficients of this series grow unboundedly. However, over a finite field $\F_p$, 
where $p$ is an odd prime, coefficients of the same formal power series form a 
periodic sequence, since $2^{p-1}\equiv 1\pmod{p}$.
Over $\F_2$, this sequence $\{1,\,0,\,\ldots\}$ is {\sl eventually } (or {\sl ultimately}) 
periodic. It follows from polynomial long division and Theorem 8.40 of Lidle and 
Niederreiter \cite{FF}, or Theorem 1.5 of Everest et al. \cite{RS}, 
that over a finite field, coefficients of the formal power series 
expansion of any rational function satisfy a linear recurrence relation, 
and therefore form an ultimately periodic sequence. Hence, if we take 
two rational functions $g(t)$ and $f(t)$ such that $g(0)\neq 0$ and $f(0)=0$, 
and consider the Riordan array 
$\bigl(g(t),\,f(t)\bigr)$ over a finite field, its $k$-th column ($k\geq 0$) will be 
eventually periodic, since its entries are the coefficients of the formal power 
series $g(t)f^k(t)$. 

When both $g(t)$ and $f(t)$ are polynomials, each column will have only 
finitely many non-zero terms, and thus, a trivial ultimate periodicity. 
In this paper, we are interested in periodic properties of the Riordan arrays 
$\bigl(g(t),\,f(t)\bigr)$ and $\bigl(g(t),\, f(t),\, h(t)\bigr)$, where $f(t)$ and $h(t)$ 
are polynomials, and $g(t)$ is a non-polynomial rational function over a finite field. 

The structure of the article is the following. A brief overview of Riordan arrays and linear 
recurring sequences is given next. In particular, we show where the periodicity in the 
coefficients of a formal power series begins. Then, we explain 
how to obtain the periodic blocks of the columns of the Riordan array 
$\bigl(g(t),\,f(t)\bigr)$ using the periodic block of $g(t)$ and powers of 
a circulant matrix generated by the coefficients of $f(t)$. 
(see Theorem \ref{MainThm1}). At the end of that section, 
we provide an example proving that the corresponding $A$ and $Z$ 
sequences of the Riordan array $\bigl(g(t),\,f(t)\bigr)$ need not 
be periodic in general. In section 4, we consider three-dimensional 
Riordan array $\bigl(g(t),\,f(t),\,h(t)\bigr)$ of polynomials 
and determine how the periodic orbits in the layers are related to 
each other via a circulant matrix generated by the coefficients of $h(t)$
(Theorem \ref{MainThm2}). 
In the last section we prove that the preperiodic partial sums in the 
columns of the Riordan array $\bigl(g(t),\,f(t)\bigr)$ are periodic 
(see Theorem \ref{PPSThm}) and present  
an infinite family of particular Riordan arrays, where the sequence of such 
partial sums is identically zero. 

~

\noindent {\bf Acknowledgement}

~

This research was funded by the Center for Undergraduate 
Research and Creative Activity (CURCA) at Siena University under the Summer 
Scholars Program. The authors also acknowledge the use of AI tools for 
scholarly literature exploration, proof verification, and manuscript preparation.

%%%%%%%%%%%%%%%%%%%%%%%%%%%
%%%%%%%%%%%%%%%%%%%%%%%%%%%
%%%%%%%%                      %%%%%%%%%%%%
%%%%%%%%   Section 2    %%%%%%%%%%%%
%%%%%%%%                      %%%%%%%%%%%%
%%%%%%%%%%%%%%%%%%%%%%%%%%%
%%%%%%%%%%%%%%%%%%%%%%%%%%%

\section{Preliminaries}

\subsection{Riordan Arrays}

For the detailed introduction to the subject, the reader is referred to 
the books by Barry \cite{Barry} and Shapiro et al. \cite{Shapiro0}, 
and a survey article by Davenport et al. \cite{Davenport}. Below are a few basic 
definitions and notations used throughout.

We fix an arbitrary finite field $\F_q$, where $q = p^m$, for some $m\in\natu$ and a 
rational prime $p\in\natu$. The set of all formal power series (f.p.s. from now on) in 
an indeterminate $t$ with coefficients in $\F_q$ will be denoted by $\F_q[[t]]$.
The \emph{order} of $f(t) =\sum_{k=0}^\infty f_kt^k \in \F_q[[t]]$, 
is the minimal number $r\in{\mathbb N_0=\{0\}\cup\natu}$ such that $f_r \neq 0$, 
and the set of all f.p.s. of order $r$ is denoted by ${\cal F}_r$.

Let $g(t) \in {\cal F}_0$ and $f(t) \in {\cal F}_1$; 
the pair $\bigl(g(t) ,\,f(t)\bigr)$ defines the {\em proper Riordan array} 
$$
\R =(d_{n,k})_{n,k\geq 0}=\bigl(g(t) ,\,f(t)\bigr)
$$ 
having
\begin{equation}\label{1}
d_{n,k} = [t^n]g(t) f(t)^k,
\end{equation} 
where $[t^n]h(t)$ denotes the coefficient of $t^n$ in the expansion of a f.p.s. $h(t)$. 
The set of all such lower triangular matrices over $\F_q$ forms a multiplicative group, 
called the {\sl Riordan group}. The group operation here is the usual matrix multiplication, 
which is written in terms of the f.p.s. as 
\begin{equation}
\label{GRoper}
\bigl(g(t),\,f(t)\bigr) * \bigl(v(t),\,u(t)\bigr) = \bigl(g(t)v(f(t)),\,u(f(t))\bigr), 
\end{equation}
with the Riordan array $I = (1,\,t)$ acting as the group identity. 
The inverse of the Riordan array $\bigl(g(t) ,\,f(t)\bigr)$, is the pair
$$
\bigl(g(t) ,\,f(t)\bigr)^{-1} = \left(\frac{1}{g(\bar{f}(t))},\,\bar{f}(t)\right),
$$
where we used the standard notation $\bar{f}(t)$ for the compositional inverse 
of $f(t)$, that is $\bar{f}(f(t)) = t$ and $f(\bar{f}(t)) = t$. 

Periodicities in the Riordan array $\bigl(1/(1-t^{d+1}),\,tp(t)\bigr)$, where 
$p(t)\in\comp[t]$ is a polynomial of degree $d$, have been studied by 
the third author in \cite{Krylov1} and \cite{Krylov2}. Here we will discuss 
similar periodic properties in the Riordan arrays $\bigl(p_1(t)/p_2(t),\,tp_3(t)\bigr)$ and 
$\bigl(p_1(t)/p_2(t),\,tp_3(t),\,p_4(t)\bigr)$, where $p_i(t)\in \F_q[t]$.

%%%%%%%%                           %%%%%%%%%%%%
%%%%%%%%   Subsection     %%%%%%%%%%%%
%%%%%%%%                           %%%%%%%%%%%%

\subsection{Linear Recurring Sequences}

It is well known that a linear recurrence relation over a finite field 
generates an ultimately periodic sequence. It is also straightforward to 
see how the formal power series expansion of a rational function defines a 
linear recurrence relation. Here, we briefly recall the corresponding terminology 
and, for the sake of completeness and convenience, state and prove a few  
basic results, which will be used repeatedly in what follows. 
The reader will find comprehensive details in the monographs 
\cite[Chapter~8]{FF} and \cite[Chapter~3]{RS}. In particular, our 
Lemma~\ref{LPer} corresponds to the first part in Theorem~3.2 of \cite{RS}.

Take two polynomials over $\F_q$, 
$p_1(t)$ and $p_2(t)$ of degrees $d_1$ and $d_2$ respectively, and 
assume that they are relatively prime. 
Since the ratio $p_1(t)/p_2(t)$ will be the generating function of 
the zeroth column of a Riordan array, we require that $p_i(0)\neq 0$. 
This ratio determines a unique {\sl $d_2$-th order homogeneous 
linear recurring sequence} (abbreviated as L.R.S. from now on) in 
$\F_q$. Indeed, since the ring $\F_q[t]$ is a Euclidean domain, there 
exist unique polynomials $q(t)$ and $r(t)$ such that 
\begin{equation}
\label{QR1}
p_1(t) = q(t)p_2(t) + r(t),
\end{equation} 
and $0\leq \deg(r(t)) < \deg(p_2(t))$. If $d_1\geq d_2$, consider the 
ratio $r(t)/p_2(t)$ instead of $p_1(t)/p_2(t)$, and since  
$\gcd(r(t),\,p_2(t))  = \gcd(p_1(t),\,p_2(t)) =1$, we can assume here 
that $d_1 < d_2$. Write the polynomials $p_1(t)$ and $p_2(t)$ as 
$$
p_1(t) = a_0 + a_1 t + \dots + a_{d_1} t^{d_1}, ~ 
p_2(t) = b_0 + b_1 t + \dots + b_{d_2} t^{d_2}
$$ 
with $a_0a_{d_1}\neq 0$ and $b_0b_{d_2}\neq 0$, and the ratio $p_1(t)/p_2(t)$ 
as a f.p.s.
\begin{equation}
\label{FPS1}
\frac{p_1(t)}{p_2(t)} = \sum _{k=0}^{\infty }s_{k}t^{k}.
\end{equation}
Identity (\ref{FPS1}) is equivalent to 
$$
\left(b_0 + b_{1}t + \dots + b_{d_2}t^{d_2}\right)
\left(s_0 + s_1t + \dots \right) = a_0 + a_{1}t + \dots + a_{d_1}t^{d_1}.
$$
Equating the coefficients of $t^k$ on both sides 
yields the initial conditions 
$$
b_0s_0 = a_0, ~  b_0s_1 + b_1s_0 = a_1, ~  
\ldots , ~ \sum\limits_{i=0}^{d_1} b_is_{d_1-i} = a_{d_1},
$$ 
and for each $k\geq d_1 + 1$ 
$$
\sum\limits_{i=0}^{k} b_is_{k - i} = 0,
$$
where we assumed $b_i = 0$ when $i > d_2$. In particular,  
for every integer $n\geq 0$ taking $k = d_2 + n$ one obtains the relation
\begin{equation}
\label{LRR1}
s_{d_2 + n} = s_{d_2 + n - 1}\left(\frac{-b_1}{b_0}\right) + 
s_{d_2 + n-2}\left(\frac{-b_2}{b_0}\right) + 
\cdots + s_n \left(\frac{-b_{d_2}}{b_0}\right),
\end{equation}
which is equivalent to the relation (8.1) from Chapter 8 of \cite{FF}. In this way  
the ratio $p_1(t)/p_2(t)$ determines a $d_2$-th order homogeneous L.R.S. 
$\{s_i\}_{i\geq 0}$ in $\F_q$. When $d_1 \geq d_2$, we add the polynomial $q(t)$ from 
(\ref{QR1}) to the generating function $r(t)/p_2(t)$ and obtain the generating 
function for $p_1(t)/p_2(t)$. The corresponding sequence satisfies the same 
recurrence relation as before, but with shifted 
indices and different initial conditions. The polynomial 
\begin{equation}
\label{CharPol1}
f(t) = t^{d_2} + \left(\frac{b_1}{b_0}\right) t^{d_2 - 1} + 
\left(\frac{b_2}{b_0}\right) t^{d_2 - 2} + \cdots +  
\left(\frac{b_{d_2-1}}{b_0}\right) t  + \left(\frac{b_{d_2}}{b_0}\right)
\end{equation}
equals the characteristic polynomial  $\chi(t) = \det(tI-A)$ of the associated matrix
\begin{equation}
\label{AsMA}
A = 
\begin{pmatrix}
0 & 0 & 0 & \cdots & 0 & -b_{d_2}/b_0\\
1 & 0 & 0 & \cdots & 0 & -b_{d_2-1}/b_0\\
0 & 1 & 0 & \cdots & 0 & -b_{d_2-2}/b_0\\
\vdots & \vdots & \vdots & & \vdots & \vdots\\
0 & 0 & 0 & \cdots & 1 & -b_{1}/b_0\\
\end{pmatrix}\in \mbox{GL}(d,\,\F_q),
\end{equation}
and is called the {\sl characteristic polynomial} of the L.R.S. $\{s_i\}_{i\geq 0}$ 
determined by (\ref{LRR1}). Since $b_{d_2}/b_0\neq 0$, Theorems 8.7 and 
8.11 from \cite{FF} imply that every L.R.S. satisfying (\ref{LRR1}) will be 
periodic with least period $\pi \leq q^{d_2} -1$. The sequence $\{s_i\}_{i\geq 0}$ 
is called {\sl periodic} (or {\sl purely periodic}) with a period $\pi$ if 
$s_{j+\pi} = s_j$ for all $j\geq 0$. If the periodicity begins at a later term, 
we call such a sequence {\sl ultimately or eventually periodic}.

The same L.R.S. satisfies many linear recurrence relations, 
and the least period of the sequence depends largely on the characteristic 
polynomial $f(t)$ of the sequence. For example, Theorem 8.27 from \cite{FF} 
states that the least period of the sequence divides the order of $f(x)$, 
which is the least positive integer $e$ for which $f(t)$ divides $t^e - 1$ in 
$\F_q[t]$. We refer the reader to \cite[Chapter~8]{FF} and 
\cite[Chapter~3]{RS} for the corresponding proofs and further details 
regarding the properties of the least period of a L.R.S. 

We want to show that the linear recurring sequences corresponding to the 
fractions $p_1(t)/p_2(t)$ and $1/p_2(t)$ with $\gcd(p_1(t),\,p_2(t)) =1$ 
have the same least period  (cf. Theorem~3.2 from \cite{RS}). 
To do so, we apply Theorem 8.44 from \cite{FF}, which states that the 
least period of a L.R.S. satisfying (\ref{LRR1}) equals the order of its 
minimal polynomial. For a L.R.S. $\{s_i\}_{i\geq 0}$ 
there exists a unique monic polynomial $m(t)\in \F_q[t]$ with the following 
property: a monic polynomial $f(t)\in \F_q[t]$ of positive degree is a 
characteristic polynomial of $\{s_i\}_{i\geq 0}$ if and only if $m(t)$ 
divides $f(t)$. This unique polynomial $m(t)$ is called the {\sl minimal 
polynomial} of the sequence (see \cite[\S 8.4]{FF}). 

\begin{lemma}
\label{LPer}
If $\gcd(p_1(t),\,p_2(t)) =1$, then the linear recurring sequences determined 
by fractions $p_1(t)/p_2(t)$ and $1/p_2(t)$ have equal least periods.
\end{lemma}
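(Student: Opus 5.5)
The plan is to show that both sequences have the same minimal polynomial, namely the reciprocal of $p_2(t)$ normalized as in (\ref{CharPol1}). Theorem 8.44 of \cite{FF} then gives equal least periods. Write $f(t)=t^{d_2}p_2(1/t)/b_0$ for the monic reciprocal polynomial of $p_2$. Both sequences $\{s_i\}$ (from $p_1/p_2$) and $\{u_i\}$ (from $1/p_2$) satisfy (\ref{LRR1}), so $f(t)$ is a characteristic polynomial of each. As the paper notes, for $d_1\ge d_2$ the sequence from $p_1/p_2$ satisfies the same recurrence up to an index shift. It is therefore purely periodic, so we may replace $p_1$ by the remainder $r$ from (\ref{QR1}) without changing the tail period. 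To handle this cleanly, we observe that $p_1/p_2$ and $r/p_2$ differ by a polynomial, so the two sequences agree for all large indices. Since both are purely periodic (because $b_{d_2}\neq0$), they have the same least period. Hence we may assume $d_1<d_2$.

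The key step is a general characterization. A monic polynomial $g(t)=t^e+c_1t^{e-1}+\dots+c_e$, with reciprocal $g^*(t)=1+c_1t+\dots+c_et^e$, is a characteristic polynomial of a sequence $\{s_i\}$ with generating function $S(t)$ if and only if $g^*(t)S(t)$ is a polynomial of degree $<e$. This follows by comparing coefficients exactly as in the derivation of (\ref{LRR1}): the coefficient of $t^{k}$ in $g^*S$ for $k\ge e$ is precisely the recurrence expression. Now suppose the minimal polynomial $m(t)$ of $\{s_i\}$ is a proper divisor of $f(t)$. Then $m^*(t)\,p_1(t)/p_2(t)=P(t)$ is a polynomial, so $p_2\mid m^*p_1$. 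Since $\gcd(p_1,p_2)=1$, this forces $p_2\mid m^*$. But $\deg m^*\le\deg m<d_2=\deg p_2$ and $m^*\neq0$, which is a contradiction. One subtlety is that reciprocals can drop degree when $m(0)=0$. We need only $\deg m^*\le \deg m$, which always holds, and $m^*(0)=1\neq0$, so the divisibility still yields a contradiction. Hence $m(t)=f(t)$ for $\{s_i\}$.

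The same argument with $p_1=1$ shows that $f(t)$ is also the minimal polynomial of $\{u_i\}$. By Theorem 8.44 of \cite{FF}, each least period equals $\mathrm{ord}(f)$, so the two least periods coincide.

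The main obstacle is the degree bookkeeping in the characterization step. We must make sure that "$g^*S$ is a polynomial of degree $<e$" is exactly equivalent to $g$ being characteristic, including the initial-condition range. For the contradiction, however, only the weaker consequence that $g^*S$ is a polynomial is needed, so we will state just the forward direction carefully.
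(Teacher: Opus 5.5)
Your proof is correct and follows essentially the same route as the paper. Both arguments identify the minimal polynomial of each sequence as the normalized reciprocal $t^{d_2}p_2(1/t)/b_0$ by writing the generating function over $m^*(t)$ and using $\gcd(p_1,p_2)=1$, then apply Theorem 8.44; you conclude by the degree contradiction $p_2\mid m^*$, while the paper writes $f=mh$ and shows $h^*$ would be a common factor of $p_1$ and $p_2$.

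There is one small slip in your reduction step. When $d_1\ge d_2$, the sequence of $p_1/p_2$ is not purely periodic (Lemma \ref{FPSperiod}), so drop that claim. The reduction still works: the least period of an ultimately periodic sequence depends only on its tail, and $\gcd(r,p_2)=1$.
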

\begin{proof}
Since Theorem 8.44 from \cite{FF} states that the least period of a L.R.S. 
equals the order of the minimal polynomial, it is enough to prove that the 
sequences corresponding to two given fractions have the same 
minimal polynomials. In fact, we show that both minimal polynomials equal  
the polynomial $(1/b_0)t^{d_2}p_2(1/t)$. Write the 
ratio $p_1(t)/p_2(t)$ as a generating function 
\begin{equation}
\label{Frac1}
\frac{p_1(t)}{p_2(t)} = S(t) = \sum\limits_{i=0}^{\infty} s_it^i.
\end{equation}
As we explained above, the sequence of coefficients $\{s_i\}_{i\geq 0}$ is 
ultimately periodic with the characteristic polynomial $f(t)$ given 
in (\ref{CharPol1}). Notice that 
$$
b_0t^{d_2}f(1/t) = b_0 + b_1t + \cdots + b_{d_2}t^{d_2} = p_2(t),
$$
that is, $p_2(t)$ is a scalar multiple of the {\sl reciprocal characteristic 
polynomial} $f^*(t) = t^{d_2}f(1/t)$. According to \cite[Theorem 8.40]{FF}, 
the generating function $S(t)$ equals 
\begin{equation}
\label{GF1}
\frac{g(t)}{f^*(t)}
\end{equation}
for some polynomial $g(t)$, satisfying $\deg(g(t)) < \deg(f^*(t))$, and since 
$f^*(t) = t^{d_2}f(1/t) = p_2(t)/b_0$ we can rewrite (\ref{GF1}) as
\begin{equation}
\label{GF2}
S(t) = \frac{g(t)}{f^*(t)} = \frac{b_0g(t)}{p_2(t)} =  \frac{p_1(t)}{p_2(t)}.
\end{equation}
Condition $\gcd(p_1(t),\,p_2(t)) =1$ implies that $b_0g(t) = p_1(t)$. Furthermore, 
if we denote the minimal polynomial of the L.R.S. $\{s_i\}_{i\geq 0}$ by 
$m(t)$, then Theorem 8.42 from \cite{FF} implies that $m(t)$ will also be a characteristic 
polynomial of the sequence $\{s_i\}_{i\geq 0}$, and $f(t) = m(t)h(t)$ for some 
$h(t)\in\F_q[t]$. Using Theorem 8.40 again, there is a polynomial $g_2(t)$ 
such that
\begin{equation}
\label{GF3}
\frac{p_1(t)}{p_2(t)} = S(t) = \frac{g_2(t)}{m^*(t)} = \frac{g(t)}{f^*(t)} 
=  \frac{g(t)}{m^*(t)h^*(t)}.
\end{equation}
The last equality in (\ref{GF3}) implies $g(t)=g_2(t)h^*(t)$, so  
$p_1(t) = b_0g_2(t)h^*(t)$. Since 
$$
p_2(t) = b_0f^*(t) = b_0m^*(t)h^*(t) ~ \mbox{and} ~ \gcd(p_1(t),\,p_2(t)) =1,
$$
the polynomial $h^*(t)$ is constant, and therefore $h(t)$ must also be 
constant. Since both $f(t)$ and $m(t)$ are monic, $h(t) = 1$ and $f(t) = m(t)$. 
In particular, $m(t) = (1/b_0)t^{d_2}p_2(1/t)$. Since the only condition 
needed from the polynomial $p_1(t)$ was the nonexistence of common 
factors with $p_2(t)$, exactly the same argument shows that the fraction 
$1/p_2(t)$ has the same minimal polynomial, and the proof is finished.
\end{proof}

\begin{note}
According to section \S 8.2 of \cite{FF}, if the characteristic 
polynomial $f(t)$ of a L.R.S. $\{s_i\}_{i\geq 0}$ is irreducible over $\F_q$ with 
$\deg(f(t))= d$, then the least period of this sequence 
divides $q^d - 1$.  Example 8.31 from \S 8.2 shows 
that this statement is false for the reducible polynomial 
$$
f(t) = 1+ t + t^5 = (1 + t + t^2)(1 + t^2 + t^3)\in \F_2[t].
$$ 
In this case $f^*(t) = 1+t^4+t^5$, and the periodic block of $1/f^*(t)$ 
$$
\{s_i\}_{i = 0}^{i = 20} = 
\{1,\, 0,\, 0,\, 0,\, 1,\, 1,\, 0,\, 0,\, 1,\, 0,\, 1,\, 0,\, 1,\, 1,\, 1,\, 1,\, 1,\, 0,\, 0,\, 0,\, 0\}
$$
has length 21, which does not divide $2^5 - 1 = 31$.
\end{note} 

Since every L.R.S. $\{s_i\}_{i\geq 0}\in \F_q$ is ultimately 
periodic, say with the least period $\pi$, we need 
to know at what term $r$ the periodicity $s_{r} = s_{r+\pi}$ begins. We 
answer this question in the next lemma. As before, we assume that 
$\gcd(p_1(t),\,p_2(t)) =1$ and $d_i = \deg(p_i(t))$.

\begin{lemma}
\label{FPSperiod}
Periodicity of the L.R.S. determined by the fraction $p_1(t)/p_2(t)$ begins at the 
$(d_1 - d_2 + 1)$-st term when $d_1 - d_2 + 1 > 0$, and at the zeroth term otherwise.
\end{lemma}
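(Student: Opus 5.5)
The plan is to split off the polynomial part of $p_1(t)/p_2(t)$ and reduce to the case $d_1<d_2$, where the sequence is purely periodic. Using (\ref{QR1}), write $p_1(t)=q(t)p_2(t)+r(t)$ with $\deg r(t)<d_2$, so that
$$
\frac{p_1(t)}{p_2(t)} = q(t) + \frac{r(t)}{p_2(t)}, \qquad s_i = q_i + u_i ,
$$
where $q(t)=\sum q_it^i$ and $r(t)/p_2(t)=\sum u_it^i$. If $d_1<d_2$ then $q(t)=0$. If $d_1\ge d_2$ then $\deg q(t)=d_1-d_2$ and its leading coefficient $q_{d_1-d_2}=a_{d_1}/b_{d_2}\neq 0$. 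As observed before Lemma \ref{LPer}, $\gcd(r(t),p_2(t))=1$, so by Lemma \ref{LPer} the sequences $\{u_i\}$ and $\{s_i\}$ are governed by the same minimal polynomial $(1/b_0)t^{d_2}p_2(1/t)$. It also follows that their least periods agree, because the two sequences coincide for $i>d_1-d_2$ and the least period depends only on a tail.

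The first key step is to show that $\{u_i\}_{i\ge 0}$ is purely periodic. Because $\deg r(t)<d_2$, the coefficient comparison carried out before (\ref{LRR1}) shows that the recurrence (\ref{LRR1}) holds for every $n\ge 0$. This is exactly the statement that the state vectors $(u_n,\dots,u_{n+d_2-1})$ are obtained from one another by multiplication by the transpose of the associated matrix $A$ in (\ref{AsMA}). Since $b_{d_2}\neq 0$, the matrix $A$ is invertible. Hence the map on the finite set $\F_q^{d_2}$ is a bijection, so every orbit is a cycle, and the sequence is purely periodic from index $0$. This is Theorem 8.11 of \cite{FF}, which I would simply cite.

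Two consequences follow. If $d_1<d_2$, then $s_i=u_i$ and the sequence is periodic from the zeroth term, which settles that case. If $d_1\ge d_2$, then $s_i=u_i$ for all $i\ge d_1-d_2+1$, so periodicity with least period $\pi$ holds from term $d_1-d_2+1$ onward.

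The remaining step, which I expect to need the most care, is to show that periodicity does not begin earlier, i.e.\ $s_{d_1-d_2}\neq s_{d_1-d_2+\pi}$. Here I use pure periodicity of $\{u_i\}$ together with $d_1-d_2+\pi>d_1-d_2$:
$$
s_{d_1-d_2+\pi}=u_{d_1-d_2+\pi}=u_{d_1-d_2}=s_{d_1-d_2}-q_{d_1-d_2}\neq s_{d_1-d_2}.
$$
Once the relation $s_j=s_{j+\pi}$ fails at $j=d_1-d_2$, it cannot hold for all $j\ge j_0$ with $j_0\le d_1-d_2$. Moreover, any period of a tail of $\{s_i\}$ is a multiple of the least period $\pi$ of $\{u_i\}$, so this failure rules out an earlier start for every period, not only for $\pi$. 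This pins the start of periodicity at exactly the $(d_1-d_2+1)$-st term.
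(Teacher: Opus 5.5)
Your proof is correct and follows the paper's argument: write $p_1/p_2=q+r/p_2$, use pure periodicity of the coefficients $u_i$ of $r/p_2$ (Theorem~8.11 of \cite{FF}), and compare $s_{d_1-d_2}=q_{d_1-d_2}+u_{d_1-d_2}$ with $s_{d_1-d_2+\pi}=u_{d_1-d_2}$. Your extra remark that every period of a tail of $\{s_i\}$ is a multiple of $\pi$ makes explicit a point the paper leaves implicit. One harmless slip: when $d_1\ge d_2$ the sequence $\{s_i\}$ is not purely periodic, so its minimal polynomial is not $(1/b_0)t^{d_2}p_2(1/t)$ (it carries an extra factor $t^{d_1-d_2+1}$), and Lemma~\ref{LPer} does not apply to it directly; nothing depends on this, because your tail argument gives the equality of least periods directly.
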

\begin{proof}
Use the division in $\F_q[t]$ and write  
$$
p_1(t) = q(t)p_2(t) + r(t), ~ \mbox{and} ~ d_1= \deg(q(t)) + d_2,
$$
with $r(t)/p_2(t) = \sum\limits_{i\geq 0} \rho_it^i$. Theorem 8.11 from \cite{FF} 
states that the L.R.S. $\{\rho_i\}_{i\geq 0}$ is periodic, and if we denote the 
least period by $\pi$, it means $\rho_k = \rho_{k+\pi}$ for all $k\geq 0$.
If $q(t) = q_0 + q_1t + \cdots + q_nt^n$ with $q_n\neq 0$ in $\F_q$, then 
$$
[t^n]\bigl(p_1(t)/p_2(t)\bigr) = q_n + \rho_n \neq \rho_{n+\pi} = 
[t^{n + \pi}]\bigl(p_1(t)/p_2(t)\bigr).
$$ 
However, for all $i\geq n+1$, we do have 
$$
[t^i]\bigl(p_1(t)/p_2(t)\bigr) = \rho_i = \rho_{i+\pi} = [t^{i+\pi}]\bigl(p_1(t)/p_2(t)\bigr).
$$
Thus, the sequence of coefficients of $p_1(t)/p_2(t)$ is ultimately 
periodic, and its periodicity starts at the $(n+1)$-st term. Since 
$$
n + 1 = \deg(q(t)) + 1 = d_1 - d_2 + 1,
$$
we proved the first statement when $d_1 - d_2 + 1 > 0$. When  
$d_1 - d_2 + 1 \leq 0$, the quotient polynomial $q(t) = 0$, 
and we have the periodic sequence determined by $r(t)/p_2(t)$, 
with the periodicity starting at the zeroth term.
\end{proof}

In the last lemma of this section, we prove that if the degree of the 
denominator $d_2 > d_1 + 1$, then the periodic block of the L.R.S. 
determined by $p_1(t)/p_2(t)$ ends with several zero terms 
(cf. the first few columns in the  Riordan array \ref{HardPer0} 
from Example \ref{Example5} below).

\begin{lemma}
\label{End0}
Let $p_1(t)$ and $p_2(t)$ be polynomials over $\F_q$ satisfying the same assumptions as in 
two previous lemmas and $d_2 > d_1 + 1$. Then, the last $d_2 - (d_1 + 1)$ terms of the 
periodic block of the L.R.S. determined by $p_1(t)/p_2(t)$ equal zero.
\end{lemma}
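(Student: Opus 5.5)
Since $d_2 > d_1+1$ forces $d_1 - d_2 + 1 \le 0$, Lemma~\ref{FPSperiod} tells us that the sequence $\{s_i\}_{i\ge 0}$ determined by $p_1(t)/p_2(t)$ is purely periodic. If $\pi$ denotes its least period, then $s_{j+\pi}=s_j$ for all $j\ge 0$. It is natural to extend the sequence backwards by periodicity, setting $s_{-j} := s_{\pi - j}$ for $1\le j\le \pi$. The claim is then equivalent to
\[
s_{-1}=s_{-2}=\cdots=s_{-(d_2-d_1-1)}=0,
\]
that is, $s_{\pi-j}=0$ for $1\le j\le d_2-d_1-1$.

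The key step is to show that the recurrence (\ref{LRR1}) still holds when it is shifted to negative indices for the periodic extension. For any $n\ge 0$ we have $s_{\pi+n+d_2-i} = s_{n+d_2-i}$. Applying (\ref{LRR1}) at a large index $n+\pi\cdot k$ and reducing modulo $\pi$ shows that the relation
\[
\sum_{i=0}^{d_2} b_i s_{m-i} = 0
\]
holds for every integer $m$, once indices are read modulo $\pi$. In particular it holds for all $m$ with $m-d_2 \ge -\pi$. On the other hand, the initial conditions give
\[
\sum_{i=0}^{k} b_i s_{k-i} = a_k \quad \text{for } 0\le k\le d_1,
\]
with $a_k=0$ for $k>d_1$. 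Comparing the two, for each $k$ with $d_1 < k < d_2$ the truncated sum $\sum_{i=0}^{k} b_i s_{k-i}$ equals $0$, while the full periodic relation $\sum_{i=0}^{d_2} b_i s_{k-i}=0$ also holds. Subtracting yields
\[
\sum_{i=k+1}^{d_2} b_i s_{k-i} = 0 \quad \text{for } k=d_1+1,\dots,d_2-1.
\]

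This gives a triangular system in the unknowns $s_{-1},\dots,s_{-(d_2-d_1-1)}$. Take $k=d_2-1$ first: the sum reduces to the single term $b_{d_2}s_{-1}=0$, and since $b_{d_2}\neq 0$ we get $s_{-1}=0$. Next take $k=d_2-2$: the sum is $b_{d_2-1}s_{-1}+b_{d_2}s_{-2}=0$, hence $s_{-2}=0$. Continuing by induction down to $k=d_1+1$ gives $s_{-j}=0$ for $j$ up to $d_2-d_1-1$.

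The main care point is making sure these negative indices really correspond to the last terms of the periodic block. This requires $d_2-d_1-1\le\pi$, so that the indices $\pi-j$ do not wrap around past the start of the block. If $\pi$ were smaller, the conclusion would force $s\equiv 0$, which is impossible since $s_0=a_0/b_0\neq 0$. Indeed $\pi \ge d_2$ is expected, because $p_2$ divides $t^{\pi}-1$ up to the reciprocal relation (the order of the minimal polynomial from Lemma~\ref{LPer}), and I would verify this bound via that lemma.

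An alternative, cleaner route is to write
\[
\frac{p_1(t)}{p_2(t)} = \frac{P(t)}{1-t^{\pi}},
\]
where $P(t)=\sum_{i<\pi}s_it^i$ is the periodic block, so that $P(t)=p_1(t)(1-t^\pi)/p_2(t)$. Then $\deg P = d_1+\pi-d_2$, which is less than $\pi-1-(d_2-d_1-2)$. This exhibits the trailing zeros directly, and I would likely present this version.
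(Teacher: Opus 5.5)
Your proposal is correct. The ``cleaner route'' you say you would present is exactly the paper's proof: write $\sum_i s_it^i=B(t)/(1-t^{\pi})$, clear denominators to get $(1-t^{\pi})p_1(t)=p_2(t)B(t)$, and read off $\deg B=\pi+d_1-d_2=\pi-D-1$ with $D=d_2-d_1-1$. Your bound $\pi-1-(d_2-d_1-2)$ is the same number $\pi-D$.

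Your main argument is genuinely different. Extending the purely periodic sequence to negative indices, the full relation $\sum_{i=0}^{d_2}b_is_{m-i}=0$ holds for every integer $m$. Subtracting the truncated relations $\sum_{i=0}^{k}b_is_{k-i}=a_k=0$ for $d_1<k<d_2$ gives a triangular system with pivot $b_{d_2}\neq 0$, which forces $s_{-1}=\cdots=s_{-D}=0$. This is valid and more explanatory: the trailing zeros are the vanishing numerator coefficients $a_{d_1+1},\dots,a_{d_2-1}$ propagated backwards through the recurrence. Equivalently, the expansion of $p_1/p_2$ in powers of $t^{-1}$ is $-\sum_{j\geq 1}s_{-j}t^{-j}$, and it begins at $t^{d_1-d_2}$.

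The cost of your route is the separate wrap-around check. Your observation already settles it: $\pi<D$ would make the whole block vanish, contradicting $s_0=a_0/b_0\neq 0$. So the detour through Lemma~\ref{LPer} to get $\pi\geq d_2$ is unnecessary. In the paper's degree count this check is automatic, since $B\neq 0$ forces $\deg B=\pi-D-1\geq 0$.
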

\begin{proof}
Using equality (\ref{Frac1}) with $p_1(t)=\sum\limits_{i=0}^{d_1}a_it^i$ and 
$p_2(t)=\sum\limits_{i=0}^{d_2}b_it^i$, we have
\begin{equation}
\label{Frac2}
a_0 + a_1t + \cdots + a_{d_1}t^{d_1} = 
\left(b_0 + b_1t + \cdots + b_{d_2}t^{d_2}\right)\sum\limits_{i = 0}^{\infty} s_it^i.
\end{equation}
Since the L.R.S. is periodic with period $\pi$, 
let us denote the sum $s_0 + s_1t + \cdots + s_{\pi-1}t^{\pi - 1}$ 
corresponding to its periodic block by $B(t)$. Then 
$$
\sum\limits_{i = 0}^{\infty} s_it^i = B(t) + t^{\pi}B(t) + \cdots = \frac{B(t)}{1 - t^{\pi}},
$$
and (\ref{Frac2}) implies the equality
\begin{equation}
\label{Frac3}
(1 - t^{\pi})\left(a_0 + a_1t + \cdots + a_{d_1}t^{d_1}\right) = 
\left(b_0 + b_1t + \cdots + b_{d_2}t^{d_2}\right)B(t).
\end{equation}
In particular, for the degrees we have 
$\pi + d_1 = d_2 + \deg(B(t))$, or $\deg(B(t)) = \pi + d_1 - d_2$. 
If we denote $d_2- (d_1 +1)$ by $D$, then $\deg(B(t)) = \pi - D - 1$, 
which makes $s_j = 0$ for each 
$$
j\in\{\pi  - D,\, \pi - D + 1,\, \ldots,\, \pi - 1\},
$$
and finishes the proof.
\end{proof}

%%%%%%%%%%%%%%%%%%%%%%%%%%%
%%%%%%%%%%%%%%%%%%%%%%%%%%%
%%%%%%%%                      %%%%%%%%%%%%
%%%%%%%%   Section 3    %%%%%%%%%%%%
%%%%%%%%                      %%%%%%%%%%%%
%%%%%%%%%%%%%%%%%%%%%%%%%%%
%%%%%%%%%%%%%%%%%%%%%%%%%%%

\section{Circulant Matrices and Periodicities}

Let us now take three polynomials $p_i(t)\in \F_q[t] ~ i\in\{1,2,3\}$, where $p_i(0)\neq 0$, 
$d_i = \deg(p_i(t))$, and $p_2(t)$ is relatively prime to the other two. 
The $k$-th column of the Riordan array 
\begin{equation}
\label{RAp123}
\left(\frac{p_1(t)}{p_2(t)},\,tp_3(t)\right)
\end{equation}
is given by the coefficients of the f.p.s. $\bigl(p_1(t)(tp_3(t))^k\bigr)/p_2(t)$. 
According to our previous section, the column is ultimately periodic with a fixed 
period, whose length is independent of $k$, and depends only on the 
polynomial $p_2(t)$. 
Periodicity in the $k$-th column will begin at the $(k(1+d_3)+d_1 - d_2 + 1)$-st
term by Lemma \ref{FPSperiod}, and to describe the periodic blocks in each 
column we use a circulant matrix generated by the coefficients of $p_3(t)$. 
We give below the definition of a circulant matrix generated by a vector, 
and refer the reader to monographs by Davis \cite{Davis} and Fuhrmann 
\cite{Fuhrmann} for detailed discussions of the circulant matrices and their properties. 

%%%%%%%%                           %%%%%%%%%%%%
%%%%%%%%   Subsection     %%%%%%%%%%%%
%%%%%%%%                           %%%%%%%%%%%%

\subsection{Circulant Matrices}

Start with a $(d + 1)$-vector 
$$
\vec{v} = (v_d, \, v_{d-1},\,  \ldots,\, v_0)\in \F_q^{d+1},
$$
and consider a shift operator $T:\F_q^{d+1}\to \F_q^{d+1}$ defined as 
$$
T(v_d, \, v_{d-1},\, \ldots, \, v_1,\, v_0):= (v_0, \, v_d,\, v_{d-1},\, \ldots, \, v_1).
$$
To the {\sl generating vector} $\vec{v}$, we associate the following 
$(d+1)\times (d+1)$ 
matrix $\c(\vec{v})$, called the {\sl circulant matrix}.
Its rows are given by iterations of the shift operator acting on the vector 
$\vec{v}$, i.e. the $i$-th row of $\c(\vec{v}) $ is 
$T^i \vec{v}$, for each $i\in\{0,\,\ldots,\, d \}$:
$$
\c(\vec{v})= 
\begin{pmatrix}
v_d & v_{d-1} & \cdots & v_1 & v_0\\
v_0 & v_{d} & \cdots & v_2 & v_1\\
\vdots & \vdots & \ddots & \vdots & \vdots\\
v_{d-2} & v_{d-3} & \cdots & v_d & v_{d-1}\\
v_{d-1} & v_{d-2} & \cdots & v_0 & v_d\\
\end{pmatrix}.
$$
For example, for $\vec{v} = (0,\, 1,\, 0,\, \ldots,\, 0) = \vec{e}_2\in\F_q^{d+1}$,
 we obtain the matrix 
\begin{equation}
\label{FundPermM}
P = \c(\vec{e}_2) = \begin{pmatrix}
0 & 1 & 0 & \cdots & 0\\
0 & 0 & 1 & \cdots & 0\\
\vdots & \vdots & \vdots & \ddots & \vdots\\
1 & 0 & 0 & \cdots & 0\\
\end{pmatrix},
\end{equation}
which corresponds to the {\sl forward shift} permutation, that is, to the cycle 
$\sigma = (1,\,2,\,\ldots,\, d+1)$ generating the cyclic group 
$C_{d+1}$. In particular, $P^{d+1} = Id$, and 
\begin{equation}
\label{FUNDcirc}
\c(v_d, \, v_{d-1},\,  \ldots,\, v_0) = v_dI + v_{d-1}P + \cdots + v_1P^{d-1} + v_0P^d.
\end{equation}

%%%%%%%%                           %%%%%%%%%%%%
%%%%%%%%   Subsection     %%%%%%%%%%%%
%%%%%%%%                           %%%%%%%%%%%%

\subsection{Column Periodic Blocks}

The next theorem describes the periodic blocks in the columns of the Riordan 
array (\ref{RAp123}) when degrees of the denominator polynomial $p_2(t)$ 
and the numerator polynomial $p_1(t)$ satisfy the inequality $d_2\leq 1 + d_1$. 
We use this condition because otherwise the periodicity does not begin with 
the first column. Additional details are given in Example \ref{Example5} 
at the end of this subsection.

\begin{theorem}
\label{MainThm1}
Take $p_i(t)\in \F_q[t] ~ i\in\{1,2,3\}$, where $p_i(0)\neq 0$, and $p_2(t)$ 
is relatively prime to $p_1(t)$ and $p_3(t)$. Let $d_i = \deg(p_i(t))$ with $d_2\leq d_1 + 1$, 
and $\pi$ be the least period of the coefficient 
sequence of the f.p.s. $1/p_2(t)$. Further, let $P = \c(\vec{e}_2)$ as in 
(\ref{FundPermM}), $p_3(t) = c_0 + c_1t + \cdots + c_{d_3}t^{d_3}$, 
and $A=\c(\vec{v}) \in GL(\pi,\,\F_q)$ be the circulant $\pi\times\pi$ 
matrix generated by the vector
\begin{equation}
\label{Vec1}
\vec{v} = \Bigl( \sum\limits_{i=0}^{\lfloor\frac{d_3}{\pi}\rfloor} c_{d_3 - \pi i},\, 
\sum\limits_{i=0}^{\lfloor\frac{d_3 - 1}{\pi}\rfloor} c_{d_3 - 1 - \pi i},\,\ldots ,\, 
\sum\limits_{i=0}^{\lfloor\frac{d_3 -  (\pi -1)}{\pi}\rfloor} 
c_{d_3 - (\pi - 1) - \pi i} \Bigr) 
\end{equation}
if $\pi < d_3 + 1$, or by the vector
\begin{equation}
\label{Vec2}
\vec{v} = (c_{d_3},\, c_{d_3 - 1} ,\, \ldots, \, c_1 ,\, c_0,\, 0,\,\ldots,\, 0) ~ 
\mbox{if} ~ \pi \geq  d_3 + 1.
\end{equation}
Then
\begin{itemize}
\item[]{(i)} for each $k\geq 1$ the sequence of coefficients of 
the $k$-th column of the Riordan array (\ref{RAp123}) is ultimately periodic with the least period 
$\pi$. Periodicity starts at the $\bigl((d_3+1)k + d_1 - d_2  + 1 \bigr)$-st term when 
$(d_3+1)k + d_1 +1 > d_2$, or at the zeroth term otherwise.
\item[]{(ii)} for each $k\geq 1$ the periodic block in the $k$-th column equals
$$
C_k = A^k \cdot C_0,
$$
where $C_0$ is the periodic block in the $0$th column.
\end{itemize}
\end{theorem}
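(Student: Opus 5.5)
The plan is to apply the lemmas of Section 2 to the $k$-th column generating function
$$
G_k(t)=\frac{p_1(t)\,t^k p_3(t)^k}{p_2(t)},
$$
and then to read off the periodic blocks by multiplying a periodic sequence by a polynomial modulo $1-t^{\pi}$.

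For part (i), the numerator $p_1(t)t^kp_3(t)^k$ has degree $d_1+k(1+d_3)$. It is relatively prime to $p_2(t)$, because $p_2(t)$ is coprime to $p_1(t)$ and to $p_3(t)$, and $p_2(0)\neq 0$ rules out the factor $t$. Lemma~\ref{LPer} then says the least period of the $k$-th column equals the least period $\pi$ of $1/p_2(t)$. Lemma~\ref{FPSperiod} gives the starting index $(d_3+1)k+d_1-d_2+1$ when this is positive, and $0$ otherwise. Both lemmas as stated require $p_1(0)\neq0$, whereas here the numerator vanishes at $0$. I expect their proofs to go through unchanged, since they only use coprimality and degrees, but I will check this remark explicitly.

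For part (ii), I will use that $d_2\le d_1+1$ makes the $0$th column periodic from index $s_0:=d_1-d_2+1\ge 0$. The same inequality makes every column start periodicity at $s_k=s_0+(d_3+1)k$, so $s_{k}=s_{k-1}+d_3+1$. I will set up an induction on $k$, relating column $k$ to column $k-1$ by $G_k=tp_3(t)G_{k-1}$. Write $x_n$ for the entries of column $k-1$, which are $\pi$-periodic for $n\ge s_{k-1}$. The entries of column $k$ are
$$
y_n=\sum_{j=0}^{d_3}c_j\,x_{n-1-j}.
$$
For $n\ge s_k$ every index $n-1-j\ge s_{k-1}$ lies in the periodic range. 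Reducing each $j$ modulo $\pi$ collapses the sum into a cyclic convolution of the periodic block with the folded coefficient vector: entry $i$ of the new block is $\sum_{r=0}^{\pi-1}w_r\,x_{s_k+i-1-r}$, where $w_r=\sum_{j\equiv r \ (\mathrm{mod}\ \pi)}c_j$. These $w_r$ are exactly the sums in (\ref{Vec1}), or the zero-padded vector (\ref{Vec2}) when $\pi\ge d_3+1$.

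The main obstacle is the index bookkeeping that makes this convolution coincide with $A\cdot C_{k-1}$. The block $C_k$ is indexed from $s_k$ and $C_{k-1}$ from $s_{k-1}$, so they are offset by $d_3+1$. I must combine this offset with the shift by $1$ from the factor $t$ and with the reversed order $(v_d,\dots,v_0)$ in the generating vector. I will write entry $i$ of $C_k$ as $\sum_r w_r\,(C_{k-1})_{i+d_3-r \bmod \pi}$. Then I will compare this with row $i$ of $\c(\vec v)$, whose entry in column $m$ is $v$ at position $(m-i)\bmod \pi$ in the vector. That entry equals $w_{d_3-(m-i)}$ with indices read modulo $\pi$, and this is exactly what the padding and folding conventions arrange. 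Using (\ref{FUNDcirc}) as $A=\sum_r w_{d_3-r}P^{r}$, I will check that $P^{r}$ acts on blocks as the cyclic shift $(P^rC)_i=C_{i+r}$. I will verify the identity in the small case $p_3=c_0$, $d_3=0$, where $A=c_0I$ and $C_k$ is $c_0$ times $C_{k-1}$ realigned; this also serves as a sanity check on the direction of $P$.

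With $C_k=AC_{k-1}$ in hand, induction gives $C_k=A^kC_0$. I will also need $A\in GL(\pi,\F_q)$, which the statement asserts and which is used only for the notation; it is not needed for the identity itself.
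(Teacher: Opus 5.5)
Your proposal is correct and follows essentially the same route as the paper: part (i) comes from Lemmas~\ref{LPer} and \ref{FPSperiod}, and part (ii) is an induction on $k$ via $G_k=tp_3(t)G_{k-1}$, where the coefficient convolution $\sum_j c_j\,s_{m+r+d_3-j}$ is folded modulo $\pi$ and recognized as the dot product of row $T^m\vec v$ of $\c(\vec v)$ with the previous block (your explicit index check, which the paper leaves to the reader, confirms the circulant's orientation is right). Both of your caveats are well placed: the lemmas' arguments use only coprimality and degrees, so they do apply to the numerator $p_1(t)t^kp_3(t)^k$, and you are right not to rely on invertibility of $A$, which can fail under the stated hypotheses (for the paper's own array $\bigl(\frac{1+t}{1+t+t^2},\,t(1+t)\bigr)$ over $\F_2$ one gets $A=\c(1,1,0)$, whose rows sum to zero).
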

%%%%%%%%   Proof of the main theorem    %%%%%%%%%%%%
\begin{proof}
Statement (i) follows immediately from Lemmas \ref{LPer} and \ref{FPSperiod}.
Using notations from (\ref{Frac1}) of Lemma \ref{LPer}, we can say that the 
first periodic block in the zeroth column of (\ref{RAp123}) will be 
$C_0 = (s_r,\,s_{r+1},\,\ldots,\, s_{r+ \pi-1})^T$, where by 
Lemma (\ref{FPSperiod}), $r = d_1 + 1 - d_2$. 
To prove (ii) for $k=1$, we need to show that the periodic block of the first column 
$C_1$ equals $A\cdot (s_r,\,s_{r+1},\,\ldots,\, s_{r+ \pi-1})^T$. To describe 
this block we will use the coefficients 
extractor operator $[t^n]f(t)$. For the details and main properties of 
this operator we refer the reader to the books by Barry \cite{Barry}, 
\S 4.5.5, and Shapiro, et al. \cite{Shapiro0}, \S 2.2. It follows from (i) 
that the first periodic block in the first column of (\ref{RAp123}) equals 
$$
C_1 = \begin{pmatrix}
[t^{(d_3+1+d_1) - (d_2-1)}]\left(\frac{p_1(t)tp_3(t)}{p_2(t)}\right)\\
[t^{(d_3+1+d_1) - (d_2-1)+1}]\left(\frac{p_1(t)tp_3(t)}{p_2(t)}\right)\\ \vdots \\ 
[t^{(d_3+1+d_1) - (d_2-1)+\pi-1}]\left(\frac{p_1(t)tp_3(t)}{p_2(t)}\right)
\end{pmatrix}.
$$
Writing $\bigl(p_1(t)tp_3(t)\bigr)/p_2(t)$ as the product 
\begin{equation}
\label{Conv1}
\frac{p_1(t)tp_3(t)}{p_2(t)} = \bigl(t(c_0 + c_1t + \ldots + 
c_{d_3}t^{d_3})\bigr)\sum\limits_{i=0}^{\infty} s_it^i,
\end{equation}
and applying the shifting and convolution of $[t^n]$, 
we obtain for every $m\geq 0$ 
$$
[t^{(d_3+1+d_1) - (d_2-1) + m}]\left(\frac{p_1(t)tp_3(t)}{p_2(t)}\right) = 
[t^{(d_3+1+d_1) + m - d_2 }]\left(\frac{p_1(t)p_3(t)}{p_2(t)}\right) 
$$
$$
= [t^{(d_3+1+d_1) + m - d_2 }]\Bigl((c_0 + c_1t + \ldots + 
c_{d_3}t^{d_3})\sum\limits_{i=0}^{\infty} s_it^i\Bigr) 
$$
$$
= c_{d_3}s_{m - d_2 + d_1 + 1} + c_{d_3-1}s_{m - d_2 + d_1 + 2} + \\
\cdots + c_0s_{m - d_2 + d_3 + d_1 + 1} 
$$
\begin{equation}
\label{Conv2}
=\sum\limits_{i=0}^{d_3} c_is_{m - d_2 + d_3 + d_1+ 1 - i} 
=\sum\limits_{i=0}^{d_3} c_is_{m + r + d_3 - i}.
\end{equation}
For the last equality in (\ref{Conv2}), recall that $r = 1+d_1-d_2\geq 0$. 
Consider next two separate cases.

~

\noindent \underline{{\rm Case 1.} $\pi \geq d_3 + 1$}:\\
For $m = 0$, we can write this sum as the dot product 
\begin{equation}
\label{DotP}
(c_{d_3},\, c_{d_3 - 1},\, \ldots,\, c_0,\, 0,\,\ldots,\, 0)\cdot 
\begin{pmatrix}
s_{r}\\ 
s_{r + 1}\\
\vdots\\
s_{r + d_3}\\
s_{r + d_3 + 1}\\
\vdots\\
s_{r + \pi - 1}
\end{pmatrix},
\end{equation}
which shows that the first element of the periodic block $C_1$ equals $ \vec{v}\cdot C_0$.
The same argument with $m = 1$ in (\ref{Conv2}) will prove that the second element of the 
periodic block $C_1$ equals the dot product 
\begin{equation}
\label{DotP2}
(0,\, c_{d_3},\, c_{d_3 - 1},\, \ldots,\, c_0,\, 0,\,\ldots,\, 0)\cdot 
\begin{pmatrix}
s_{r}\\ 
s_{r + 1}\\
\vdots\\
s_{r + d_3}\\
s_{r + d_3 + 1}\\
\vdots\\
s_{r + \pi - 1}
\end{pmatrix} = T\vec{v}\cdot C_0,
\end{equation}
and so on. For the last choice, take $m = \pi - 1$. Since $s_r = s_{\pi + r}$, in this 
case the sum (\ref{Conv2}) can be written as the dot product
\begin{equation}
\label{DotP3}
(c_{d_3 - 1},\, c_{d_3 - 2},\, \ldots,\, c_0,\, 0,\,\ldots,\, 0,\, c_{d_3})\cdot 
\begin{pmatrix}
s_{r}\\ 
s_{r + 1}\\
\vdots\\
s_{r + d_3}\\
s_{r + d_3 + 1}\\
\vdots\\
s_{r + \pi - 1}
\end{pmatrix} = T^{\pi-1}\vec{v}\cdot C_0.
\end{equation}
It proves that the periodic block in the first column indeed equals $C_1 = A\cdot C_0$. 

Next, we want to use the induction, so let us use the same notation as 
before for the the f.p.s. in the $k$-th column, that is
$$
\frac{p_1(t)(tp_3(t))^k}{p_2(t)} = \sum\limits_{i\geq 0} s_it^i.
$$
We also denote the first periodic block in this $k$-th column by 
$$
C_k = (s_r,\,s_{r+1},\,\ldots,\, s_{r+ \pi-1})^T,
$$ 
where, according to Lemma (\ref{FPSperiod}), new $r = (d_3+1)k + d_1 + 1 - d_2 \geq 0$.
Then, we have the identity analogous to (\ref{Conv1})
\begin{equation}
\label{Conv3}
\frac{p_1(t)\bigl(tp_3(t)\bigr)^{k+1}}{p_2(t)} = \bigl(t(c_0 + c_1t + \ldots + 
c_{d_3}t^{d_3})\bigr)\sum\limits_{i=0}^{\infty} s_it^i,
\end{equation}
and similarly,  
$$
[t^{(d_3 + 1)(k+1) + d_1 - (d_2 - 1) + m}]\left(\frac{p_1(t)\bigl(tp_3(t)\bigr)^{k+1}}{p_2(t)}\right)
$$ 
\begin{equation}
\label{DotPr2}
=[t^{d_3 + m + r }]\left((c_0 + c_1t + \ldots + 
c_{d_3}t^{d_3})\sum\limits_{i=0}^{\infty} s_it^i\right) = \sum\limits_{i=0}^{d_3} c_is_{m + r + d_3 - i}.
\end{equation}
Using the same arguments as before, one shows that for $m = 0$, the last sum in (\ref{DotPr2}) equals 
the dot product $\vec{v}\cdot C_k$. Similarly, for any $m\in \{1,\,2,\,\ldots,\,\pi-1\}$ one proves that the 
$m$-th element of $C_{k+1}$ equals $T^{m-1}\vec{v}\cdot C_k$, and therefore,   
$C_{k+1} = A\cdot C_k = A^{k+1}\cdot C_0$.

~

\noindent \underline{{\rm Case 2.} $\pi < d_3 + 1$:}\\
Writing $d_3 = Q\pi + R$, where $Q\in\natu$ and $0\leq R < \pi$, 
and the sum (\ref{Conv2}) for $m = 0$ as 
\begin{equation}
\label{Conv2B}
c_{d_3}s_r + c_{d_3 -1}s_{r+1} + \cdots + c_1s_{r + d_3 -1} + c_0s_{r + d_3},
\end{equation}
we see that the number $s_r= s_{r+\pi} = s_{r+j\pi}$ appears in 
(\ref{Conv2B}) exactly $Q+1$ times, with the coefficients 
$\{c_{d_3},\,c_{d_3 - \pi},\,\ldots,\, c_{d_3 - Q\pi}\}$. Similarly, the number 
$s_{r+1}= s_{r + 1 + j\pi}$ appears 
in (\ref{Conv2B}) exactly $\lfloor(d_3 - 1)/\pi\rfloor + 1$ times, and so on. 
Hence, we can write the sum (\ref{Conv2B}) as the dot product
$$
\left(\sum\limits_{i = 0}^{\lfloor \frac{d_3}{\pi}\rfloor} c_{d_3 - i\pi} , \, 
\sum\limits_{i = 0}^{\lfloor \frac{d_3 - 1}{\pi}\rfloor} c_{d_3 - 1 - i\pi} , \, \ldots, \,
\sum\limits_{i = 0}^{\lfloor \frac{d_3 - (\pi - 1)}{\pi}\rfloor} c_{d_3 + 1 - \pi - i\pi} \right) \cdot 
\begin{pmatrix}
s_{r}\\ 
s_{r + 1}\\
\vdots\\
s_{r + d_3}\\
s_{r + d_3 + 1}\\
\vdots\\
s_{r + \pi - 1}
\end{pmatrix},
$$
which shows that the first element of the periodic block $C_1$ equals $ \vec{v}\cdot C_0$.
The rest of the proof is identical to the first case, and we leave the details to the reader.
\end{proof}

\begin{example}
\label{Example5}
Now, let us explain why the periodicity in column blocks depends on the 
restriction $d_2\leq d_1 + 1$. How exactly it depends will be investigated elsewhere. 
Consider the Riordan array over $\F_3$ 
\begin{equation}
\label{RAd1Ld2}
\left(\frac{1+t^2}{(1+ 2t)^9},\,t(1 + t)\right),
\end{equation}
where $(d_3+1)k+d_1+1-d_2 = 2k - 6 \leq 0$ for $k\in\{0,1,2,3\}$. The period here is 9 
and periodicity in columns 0 - 3 begins with the 0th term, in agreement with our 
lemmas from section 2. Here are the first twelve rows and columns with 
several periodic blocks boxed. 

\begin{equation}
\label{HardPer0}
\left(\frac{1+t^2}{(1+ 2t)^9},\,t(1 + t)\right) = 
\left(
\begin{BMAT}[3pt]{cccccccccccc}{cccccccccccc}
1 & 0 & 0 & 0 & 0 & 0 & 0 & 0 & 0 & 0 & 0 & 0\\
0 & 1 & 0 & 0 & 0 & 0 & 0 & 0 & 0 & 0 & 0 & 0\\
1 & 1 & 1 & 0 & 0 & 0 & 0 & 0 & 0 & 0 & 0 & 0\\
0 & 1 & 2 & 1 & 0 & 0 & 0 & 0 & 0 & 0 & 0 & 0\\
0 & 1 & 2 & 0 & 1 & 0 & 0 & 0 & 0 & 0 & 0 & 0\\
0 & 0 & 2 & 1 & 1 & 1 & 0 & 0 & 0 & 0 & 0 & 0\\
0 & 0 & 1 & 1 & 1 & 2 & 1 & 0 & 0 & 0 & 0 & 0\\
0 & 0 & 0 & 0 & 2 & 2 & 0 & 1 & 0 & 0 & 0 & 0\\
0 & 0 & 0 & 1 & 1 & 0 & 1 & 1 & 1 & 0 & 0 & 0\\
1 & 0 & 0 & 0 & 1 & 0 & 2 & 1 & 2 & 1 & 0 & 0\\
0 & 1 & 0 & 0 & 1 & 2 & 0 & 0 & 2 & 0 & 1 & 0\\
1 & 1 & 1 & 0 & 0 & 2 & 2 & 2 & 1 & 1 & 1 & 1
\addpath{(0,3,1)ruuuuuuuuulddddddddd}
\addpath{(1,3,1)ruuuuuuuuulddddddddd}
\addpath{(2,3,1)ruuuuuuuuulddddddddd}
\addpath{(3,3,1)ruuuuuuuuulddddddddd}
\addpath{(4,1,1)ruuuuuuuuulddddddddd}
\addpath{(5,0,1)uuuuuuuurdddddddd}
\addpath{(6,0,1)uuuuuurdddddd}
\end{BMAT}
\right)
\end{equation}
Also, according to Lemma \ref{End0}, the last 6 and 4 terms in the 
corresponding periodic blocks of the 0th and the 1st columns equal zero. Furthermore, 
the circulant matrix here is $A = \c(\vec{v})$, where $\vec{v} = (1,\,1,\,0,\,0,\,0,\,0,\,0,\,0,\,0)$, 
and comparing the periodic blocks $C_0$ and $C_1$ we see that
$$
\begin{pmatrix}
1& 1& 0& 0& 0& 0& 0& 0& 0\\
0& 1& 1& 0& 0& 0& 0& 0& 0\\
0& 0& 1& 1& 0& 0& 0& 0& 0\\
0& 0& 0& 1& 1& 0& 0& 0& 0\\
0& 0& 0& 0& 1& 1& 0& 0& 0\\
0& 0& 0& 0& 0& 1& 1& 0& 0\\
0& 0& 0& 0& 0& 0& 1& 1& 0\\
0& 0& 0& 0& 0& 0& 0& 1& 1\\
1& 0& 0& 0& 0& 0& 0& 0& 1
\end{pmatrix}\cdot
\begin{pmatrix}
1\\ 0\\1\\ 0\\ 0\\ 0\\ 0\\ 0\\ 0
\end{pmatrix} = \begin{pmatrix}
1\\ 1\\1\\ 0\\ 0\\ 0\\ 0\\ 0\\ 1
\end{pmatrix} \neq \begin{pmatrix}
0\\ 1\\1\\ 1\\ 1\\ 0\\ 0\\ 0\\ 0
\end{pmatrix} = P^7\cdot \begin{pmatrix}
1\\ 1\\1\\ 0\\ 0\\ 0\\ 0\\ 0\\ 1
\end{pmatrix},
$$
that is, $C_1 = P^7A\cdot C_0$, where $P = \c(\vec{e}_2)$, 
as in (\ref{FundPermM}). Similar computations show that
$$
C_2 = P^5A^2\cdot C_0 ~~ \mbox{and} ~~ C_3 = P^3 A^3\cdot C_0.
$$
Starting with the 3rd column, the 
power of the permutation matrix $P$ seems to become constant, since
$$
C_4 = P^3 A^4\cdot C_0 ~~ \mbox{and} ~~ C_5 = P^3 A^5\cdot C_0.
$$
Thus, we see in this example that 
$$
C_k = P^{9 - 2k} A^k\cdot C_0,~ \mbox{if}~ 0\leq k\leq 3,~\mbox{and} ~ 
C_k = P^3 A^k\cdot C_0 ~ \mbox{if}~ k \geq 4.
$$
Hence, in cases when $d_2 > d_1 + 1$, the second statement of 
Theorem \ref{MainThm1} requires a refinement, with   
matrix $P$ playing a significant role. We hope the precise description of such  
periodicity when $d_2 > d_1 +1$ will be given in future research.
\end{example}

%%%%%%%%                           %%%%%%%%%%%%
%%%%%%%%   Subsection     %%%%%%%%%%%%
%%%%%%%%                           %%%%%%%%%%%%

\subsection{No Periodicity in the $A$ and $Z$ Sequences}

Every Riordan array can be characterized by two sequences: 
the $A$ sequence, and the $Z$ sequence (see, for example \cite{Barry}, \S 5.3). 
In this short subsection, we give an example showing that, in general, neither the 
$A$ nor $Z$ sequences of the Riordan array (\ref{RAp123}) over 
a finite field is ultimately periodic. 

\begin{example}
Consider the following Riordan array over $\F_2$.
\begin{equation}
\label{NoPerA1}
 \left(\frac{1+t}{1+t+t^2},\,t(1+t)\right) = 
\left(
\begin{BMAT}[3pt]{cccccccccc}{cccccccccc}
1 & 0 & 0 & 0 & 0 & 0 & 0  & 0 & 0 & 0\\
0 & 1 & 0 & 0 & 0 & 0 & 0  & 0 & 0 & 0\\
1 & 1 & 1 & 0 & 0 & 0 & 0  & 0 & 0 & 0\\
1 & 1 & 0  & 1 & 0 & 0 & 0  & 0 & 0 & 0\\
0 & 0 & 0 & 1 & 1 & 0 & 0  & 0 & 0 & 0\\
1 & 1 & 1 & 0 & 0 & 1 & 0 & 0  & 0 & 0\\
1 & 1 & 1 & 1 & 1 & 1 & 1 & 0 & 0 & 0\\
0 & 0 & 0 & 0 & 1 & 1 & 0  & 1 & 0 & 0\\
1 & 1 & 1 & 1 & 1 & 0 & 0  & 1 & 1 & 0\\
1 & 1 & 1 & 1 & 1 & 0 & 1  & 0 & 0 & 1
\addpath{(0,7,1)ruuulddd}
\addpath{(1,5,1)ruuulddd}
\addpath{(2,3,1)ruuulddd}
\addpath{(3,1,1)ruuulddd}
\end{BMAT}
\right)
\end{equation}

Using Theorems 4.3 and 4.5 of \cite{Shapiro0}, we can write 
the generating functions of 
the Z and A-sequences for the Riordan array $(g,\,f)$ correspondingly as 
\begin{equation}
\label{A-seq}
Z(t) = \frac{g(\bar{f}(t)) - d_{00}}{\bar{f}(t) g(\bar{f}(t))} ~~ \mbox{and} ~~ A(t) = \frac{t}{\bar{f}(t)},
\end{equation}
where $d_{00}$ is the element of $(g,\,f)$ in the 0th row and 0th column. 
Since in this example $g(t) = (1+t)/(1+t+ t^2),~ f(t) = t(1+t)$, and $d_{00} = 1$, 
straightforward computations give 
\begin{equation}
\label{NoPerA20}
\bar{f}(t) = \frac{\sqrt{1 + 4t} - 1}{2} ~ \mbox{and} ~ g(\bar{f}(t)) =  \frac{1+\sqrt{1+4t}}{2(1+t)},
\end{equation}
and therefore 
\begin{equation}
\label{NoPerA2a}
Z(t) = \frac{\sqrt{1 + 4t} - 1- 2t}{2t} ~\mbox{and} ~ A(t) = \frac{2t}{\sqrt{1 + 4t} - 1}.
\end{equation}
For the $A$ sequence, we have 
\begin{equation}
\label{NoPerA2}
A(t) = \frac{2t}{\sqrt{1 + 4t} - 1}  = \frac{\sqrt{1 + 4t} + 1}{2} 
= 1 + \sum\limits_{n=1}^{\infty} (-1)^{n-1}C_{n-1}t^n,
\end{equation}
where $C_n=\frac{(2n)!}{(n+1)!n!}$ stands for the $n$-th Catalan number. It is well known 
(see, for example, the paper \cite{Alter} by Alter and Curtz or the paper 
\cite{Sagan} by Deutsch and Sagan) that a Catalan number 
$C_n$ is odd if and only if $n = 2^m - 1$ for some integer $m$. Hence, if we 
look at the f.p.s. (\ref{NoPerA2}) modulo 2, we will get the series
$$
A(t) \equiv 1+t+t^2 + t^4 + t^8 + t^{16} + t^{32} + \cdots \pmod{2},
$$
with coefficients, which are not ultimately periodic. Using results from \cite{Sagan}, one 
can show that the sequence of coefficients of the f.p.s. (\ref{NoPerA2}) is not 
ultimately periodic modulo 3 either. 

As for the $Z$ sequence of the Riordan array (\ref{NoPerA1}), comparing its formula in
(\ref{NoPerA2a}) with the fraction $(\sqrt{1+4t} + 1)/2$ we see that modulo 2
$$
Z(t) = \frac{A(t) -1 }{t} - 1 \equiv t + t^3  + t^7 + t^{15} + t^{31} + \cdots + t^{2^n-1} + \cdots,
$$
that is, the $Z$ sequence of (\ref{NoPerA1}) is also not ultimately periodic. 

\end{example}

We would like to close this section by noticing that the set of the Riordan arrays 
of type (\ref{RAp123}) is closed under the matrix multiplication, which is 
written in terms of the f.p.s. as (\ref{GRoper}). Since the product, as well as 
the composition of polynomials over a field $\F_q$, produce 
polynomials over $\F_q$ again, it is clear that the 
product of two Riordan arrays of type (\ref{RAp123}) 
will be another Riordan array of the same type. Thus, the question arises 
whether the set of all Riordan arrays of type (\ref{RAp123}) forms a 
group under the binary operation (\ref{GRoper}). The answer 
is negative, since the example (\ref{NoPerA1}) we just considered, 
shows that in general, the inverse of the Riordan array of type 
(\ref{RAp123}) will not be of the same type. Indeed, 
the inverse of $(g,\,f)$ equals $(1/g(\bar{f}),\, \bar{f})$, 
and using formulas (\ref{NoPerA20}), 
one finds that the inverse of the Riordan array (\ref{NoPerA1}) equals 
$$
\left(\frac{(1+t)(\sqrt{1+4t} - 1)}{2t},\,\frac{\sqrt{1+4t} - 1}{2}\right).
$$
Clearly, we cannot write $(\sqrt{1+4t} - 1)/2$ as a polynomial. Moreover, since 
$$
\frac{(1+t)(\sqrt{1+4t} - 1)}{2t} = 1 + t^2 + (t^3 + t^4) + (t^7 + t^8) +  \cdots + 
(t^{2^k-1} + t^{2^k}) + \cdots , 
$$
the sequence of its coefficients is not ultimately periodic, and cannot be generated 
by a ratio of polynomials.

%%%%%%%%%%%%%%%%%%%%%%%%%%%
%%%%%%%%%%%%%%%%%%%%%%%%%%%
%%%%%%%%                      %%%%%%%%%%%%
%%%%%%%%   Section 4    %%%%%%%%%%%%
%%%%%%%%                      %%%%%%%%%%%%
%%%%%%%%%%%%%%%%%%%%%%%%%%%
%%%%%%%%%%%%%%%%%%%%%%%%%%%

\section{3-D Riordan Arrays and Periodic Orbits}

The three-dimensional (or 3-D) Riordan array
\begin{equation}
\label{3DRA1} 
\bigl(g(t),\,f(t),\,h(t)\bigr), ~ \mbox{where} ~ g(t),\,h(t)\in {\cal F}_0,~ f(t) \in {\cal F}_1
\end{equation}
is a generalization of the (usual or 2-D) Riordan array $\bigl(g(t),\,f(t)\bigr)$ via an additional 
f.p.s. $h(t)$, and consists of the infinitely many 2-D Riordan arrays 
\begin{equation}
\label{3DRAs} 
\bigl(g(t)h^s(t),\,f(t)\bigr)
\end{equation}
indexed by non-negative integers $s$.
Such Riordan arrays can be described as 3-D matrices, 
${\Theta} = [\gamma_{i,j,s}]_{i,j,k\geq 0} =\bigl(g(t),\,f(t),\,h(t)\bigr)$, with
$$
\gamma_{i,j,s} = [t^i]gf^jh^s, ~ i,j,s\geq 0
$$
representing the entry of $\Theta$ in the $i$-th row, $j$-th column, and $s$-th layer. 
In particular, for each fixed $s$, the $s$-th layer $\L_s(\Theta)$ of $\Theta$ 
is a proper 2-D Riordan array given by two f.p.s., $g(t)h^s(t)$ and $f(t)$. 
That is, using the standard notation for the Riordan arrays, 
$$
\L_s(\Theta) = (gh^s,f).
$$

In Figure \ref{3DPic1}, we show the first few rows, columns, and layers of the 3-D
Riordan array $\left(\frac{4+t}{1+t+t^2},\,t(4+2t+2t^2),\,1 + 3t\right)$ over $\F_{11}$.

{\small
\begin{minipage}{\textwidth}
\centering
\usetikzlibrary{matrix,calc}
\begin{tikzpicture}[every node/.style={anchor=north east,
fill=white,minimum width=0.4cm,minimum height=4mm}]
\matrix (mD) [draw,matrix of math nodes,opacity=0]
{
A & 0 & 0 & 0 & 0 & 0 &\dots \\
0 & 5 & 0 & 0 & 0 & 0 &\dots \\
3 & 8 & 9 & 0 & 0 & 0 &\dots \\
0 & 9 & 9 & 3 & 0 & 0 &\dots \\
2 & 6 & 7 & 10 & 1 & 0 &\dots \\
9 & 3 & 3 & 9 & 2 & 4 &\dots \\
\vdots & \vdots & \vdots & \vdots & \vdots &  \vdots & \ddots \\
};
\matrix (mC) [draw,matrix of math nodes] at ($(mD.south west)+(3.6,3.6)$)
{
4 & 0 & 0 & 0 & 0 & 0 &\dots \\
10 & 5 & 0 & 0 & 0 & 0 &\dots \\
6 & 4 & 9 & 0 & 0 & 0 &\dots \\
4 & 8 & 4 & 3 & 0 & 0 &\dots \\
1 & 4 & 6 & 1 & 1 & 0 &\dots \\
6 & 2 & 7 & 6 & 10 & 4 &\dots \\
\vdots & \vdots & \vdots & \vdots & \vdots &  \vdots & \ddots \\
};
\matrix (mB) [draw,matrix of math nodes,opacity=1] at ($(mC.south west)+(0.9,0.8)$)
{
4 & 0 & 0 & 0 & 0 & 0 &\dots \\
9 & 5 & 0 & 0 & 0 & 0 &\dots \\
1 & 0 & 9 & 0 & 0 & 0 &\dots \\
1 & 8 & 10 & 3 & 0 & 0 &\dots \\
9 & 2 & 9 & 3 & 1 & 0 &\dots \\
1 & 7 & 2 & 8 & 7 & 4 &\dots \\
\vdots & \vdots & \vdots & \vdots & \vdots &  \vdots & \ddots \\
};
\matrix (mA) [draw,matrix of math nodes,opacity=1] at ($(mB.south west)+(1,.8)$)
{
4 & 0 & 0 & 0 & 0 & 0 &\dots \\
8 & 5 & 0 & 0 & 0 & 0 &\dots \\
10 & 7 & 9 & 0 & 0 & 0 &\dots \\
4 & 9 & 5 & 3 & 0 & 0 &\dots \\
8 & 8 & 5 & 5 & 1 & 0 &\dots \\
10 & 5 & 9 & 4 & 4 & 4 &\dots \\
\vdots & \vdots & \vdots & \vdots & \vdots &  \vdots & \ddots \\
};
\draw[dashed](mA.north east)--(mD.north east);
\draw[thick,-stealth](mA.north west)-- node[sloped,above] 
{Layers ($s \geq 0$)} (mD.north west);
\draw[dashed,](mA.south east)--(mD.south east);
\draw[thick,-stealth] (mA.north west)
   -- (mA.north east) node[midway,above] {Cols. ($j \geq 0$)};
\draw[thick,-stealth] (mA.north west)
   -- (mA.south west) node[midway,below,rotate=270] {Rows ($i \geq 0$)};

\draw (mA-7-7) to[out=260,in=140] ++(.4cm,-1cm) node[below] {Layer $s=0$};

\draw (mB-7-7) to[out=260,in=140] ++(.4cm,-1cm) node[below] {Layer $s=1$};

\draw (mC-7-7) to[out=260,in=140] ++(.4cm,-1cm) node[below] {Layer $s=2$};
\end{tikzpicture}
\captionof{figure}{3-D Riordan array 
$\left(\frac{4+t}{1+t+t^2},\,t(4+2t+2t^2),\,1 + 3t\right)$ over $\F_{11}$}
\label{3DPic1}
\end{minipage}
}

~

Such three-dimensional arrays were introduced by Cheon and Jin in 2017, 
(see \cite{Cheon}, and also \cite[Chapter~7]{Shapiro0}) as elements of the 
three-dimensional Riordan group. Here, we will use a 3-D Riordan array as 
an infinite sequence of layers, where each layer is a certain 2-D Riordan 
array determined by four polynomials with coefficients in $\F_q$.

Thus, we take four polynomials $p_i(t)\in \F_q[t], ~ i\in\{1,\,2,\,3,\, 4\}$, where $p_i(0)\neq 0$, 
$d_i = \deg(p_i(t))$, $d_2\leq d_1 + 1$, and $\gcd(p_2(t),\,p_j(t))=1$ for each $j\in\{1,\,3,\,4\}$, 
and consider the 3-D Riordan array $\Theta = \bigl(p_1(t)/p_2(t),\,tp_3(t),\,p_4(t)\bigr)$. 
For an integer $s\geq 0$, the $k$-th column of the $s$-th layer  
\begin{equation}
\label{3DRAp2}
\L_s(\Theta) = \left(\frac{p_1(t)p^s_4(t)}{p_2(t)},\,tp_3(t)\right)
\end{equation}
is given by the coefficients of the f.p.s. $\Bigl(p_1(t)p_4^s(t)(tp_3(t))^k\Bigr)/p_2(t)$, and as we 
know from Theorem \ref{MainThm1}, the periodic block in this column equals
$$
C_k = A^k\cdot C_0 = {\rm circ}^k(\vec{v}_3)\cdot C_0.
$$ 
As above, matrix $A = {\rm circ}(\vec{v}_3)$ is generated by the coefficients of $p_3(t)$ according 
to (\ref{Vec1}), or (\ref{Vec2}), and $C_0$ is the periodic block in the 0-th column determined by the 
coefficients of the f.p.s. $p_1(t)p_4^s(t)/p_2(t)$. 

Since the set of powers of ${\rm circ}(\vec{v}_3)$ over $\F_q$ is finite, we have 
only finitely many distinct periodic blocks. For example, for the Riordan array
\begin{equation}
\label{RAExamp1}
\left(\frac{1+t+t^4}{1+t^3},\,t(1 + t^2)\right) \in \F_2[[t]]\times \F_2[[t]]
\end{equation}
the reader will easily check that there are only four different periodic blocks, starting with  
the initial block $C_0 = (0,\,1,\,0)^T$. These blocks are
$$
C_1 = C_{1 + 3k} = \begin{pmatrix}
0\\
1\\
1\\
\end{pmatrix}, ~ C_2 = C_{2 + 3k} = \begin{pmatrix}
1\\
1\\
0\\
\end{pmatrix}, ~ C_3 = C_{3 + 3k} = \begin{pmatrix}
1\\
0\\
1\\
\end{pmatrix},~k\in\natu.
$$
In other words, the orbit of $C_0$ under the matrix ${\rm circ}(\vec{v}_3)$ is finite. 
Since such a set of periodic blocks is completely determined by the Riordan array 
$\bigl(p_1(t)/p_2(t),\,tp_3(t)\bigr)$, we will call it the {\sl orbit in $\bigl(p_1(t)/p_2(t),\,tp_3(t)\bigr)$}. 

\begin{definition}
Consider the Riordan array $\L =\bigl(p_1(t)/p_2(t),\,tp_3(t)\bigr)$, where the polynomials $p_i(t)$ 
satisfy all requirements from Theorem \ref{MainThm1}. Then, we define {\sl the orbit in $\L$} 
to be the set
\begin{equation}
\label{Orbit1}
\O(\L) = \left\{ {\rm circ}(\vec{v}_3)^k\cdot C_0 ~|~k\in\natu_0 \right\},
\end{equation}
where $C_0$ is the periodic block in the zeroth column of $\L$.
\end{definition}

\noindent For example, the orbit in the Riordan array (\ref{RAExamp1}) is the set
$$
\left\{
\begin{pmatrix}
0\\
1\\
0\\
\end{pmatrix}, ~
\begin{pmatrix}
0\\
1\\
1\\
\end{pmatrix}, ~ \begin{pmatrix}
1\\
1\\
0\\
\end{pmatrix}, ~ \begin{pmatrix}
1\\
0\\
1\\
\end{pmatrix}
\right\}.
$$

When the periodic blocks have length $\leq 3$ (i.e. the period $\leq 3$), 
we can visualize the orbits by plotting the blocks as points in the corresponding 
$n$-space, and connecting consecutive points by a line segment. 
In Figure \ref{Orbit1A}, we present such a visualization for the above example of  
the Riordan array (\ref{RAExamp1}). 

\begin{figure}[h] %  figure placement: here, top, bottom, or page
\centering
\includegraphics[width=60mm]{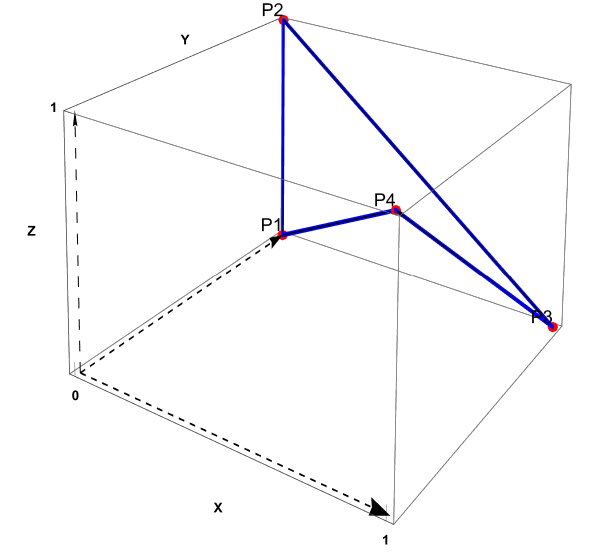} 
\caption{Orbit in the Riordan array (\ref{RAExamp1})}
\label{Orbit1A}
\end{figure}

In Figure \ref{Orbit2C}, we show a more interesting
example of an orbit in the Riordan array 
\begin{equation}
\label{RAExamp2}
\L = \left(\frac{(4+t)}{1+t+t^2},\, t(4+2t+2t^2)\right) 
\in \mathbb{F}_{11}[[t]]\times \mathbb{F}_{11}[[t]].
\end{equation}
The periodic blocks in this array are of length $3$. This 2-D Riordan 
array is the 0th layer of the 3-D Riordan array in Figure 1,
\begin{equation}
\label{Orbit2B}
\mathcal{O}(\L) = \left\{ \operatorname{circ}^k(\vec{v}_3) 
\cdot C_0\;\middle|\; k \in \mathbb{N}_0 \right\},
\end{equation}
where
$$
{circ}(\vec{v}_3)=A=
\begin{pmatrix}
2 & 2 & 4 \\
4 & 2 & 2 \\
2 & 4 & 2
\end{pmatrix},
$$
and
$$
C_0 =
\begin{pmatrix}
4 \\
8 \\
10
\end{pmatrix}.
$$
Matrix $A$ has order $30$ modulo 11, and the orbit consists of $30$ distinct points. 

\begin{figure}[h] %  figure placement: here, top, bottom, or page
\centering
\includegraphics[width=60mm]{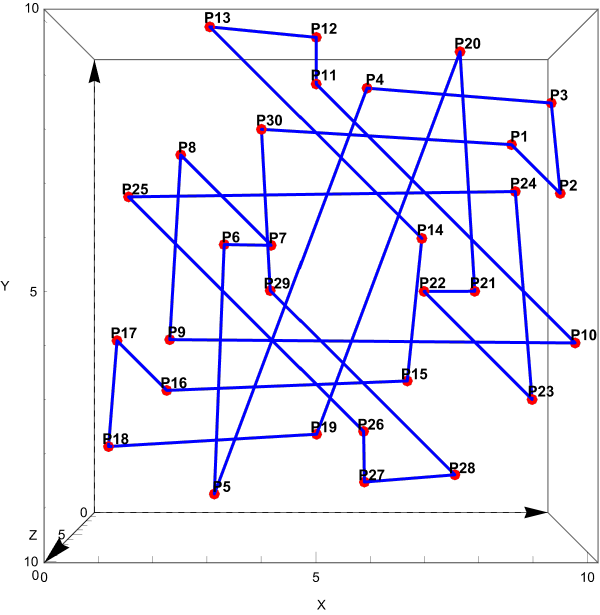} 
\caption{Orbit in the Riordan array (\ref{RAExamp2})}
\label{Orbit2C}
\end{figure}

We observed that the orbits in the layers of a 3-D Riordan array of polynomials 
also exhibit periodic behavior. In the next theorem, we show that such 
periodicity in the 3-D Riordan array $\Theta$ is determined by a circulant 
matrix generated from the coefficients of $p_4(t)$.

\begin{theorem}
\label{MainThm2}

Take four polynomials $p_i(t)\in \F_q[t], ~ i\in\{1,\,2,\,3,\, 4\}$, where $p_i(0)\neq 0$, 
$d_i = \deg(p_i(t))$, $d_2\leq d_1 + 1$, and $\gcd(p_2(t),\,p_j(t))=1$ for each $j\in\{1,\,3,\,4\}$, 
and consider the 3-D Riordan array $\Theta = \bigl(p_1(t)/p_2(t),\,tp_3(t),\,p_4(t)\bigr)$.
Let $\pi$ be the least period of the L.R.S. determined by 
the f.p.s. $1/p_2(t)$. Further, let $p_4(t) = z_0 + z_1t + \cdots + z_{d_4}t^{d_4}$, 
and $\M_4=\c(\vec{v}) \in \rm{GL}(\pi,\,\F_q)$ be the circulant $\pi\times\pi$ 
matrix generated by the vector

\begin{equation}
\label{Vec1b}
\vec{v} = \Bigl( \sum\limits_{i=0}^{\lfloor\frac{d_4}{\pi}\rfloor} z_{d_4 - \pi i},\, 
\sum\limits_{i=0}^{\lfloor\frac{d_4 - 1}{\pi}\rfloor} z_{d_4 - 1 - \pi i},\,\ldots ,\, 
\sum\limits_{i=0}^{\lfloor\frac{d_4 -  (\pi -1)}{\pi}\rfloor} 
z_{d_4 - (\pi - 1) - \pi i} \Bigr) 
\end{equation}
if $\pi < d_4 + 1$, or by the vector
\begin{equation}
\label{Vec2b}
\vec{v} = (z_{d_4},\, z_{d_4 - 1} ,\, \ldots, \, z_1 ,\, z_0,\, 0,\,\ldots,\, 0) ~ 
\mbox{if} ~ \pi \geq  d_4 + 1.
\end{equation}
Then, for each $k\geq 0$, the orbit $\O(\L_k)$ in the $k$-th layer of $\Theta$  
satisfies the pointwise relation  
\begin{equation}
\label{State2}
\O(\L_k) = \M_4^k \cdot \O(\L_0),
\end{equation}
where $\O(\L_0)$ is the orbit in the 
Riordan array $\L_0 = \bigl(p_1(t)/p_2(t),\,tp_3(t)\bigr)$.
\end{theorem}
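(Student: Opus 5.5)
The plan is to reduce the statement to Theorem \ref{MainThm1}, applied in a different direction, together with the commutativity of circulant matrices. The proof has three steps.

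\emph{Step 1: the zeroth columns of consecutive layers.} The zeroth column of $\L_{s+1}$ has generating function
$$p_4(t)\cdot \frac{p_1(t)p_4^s(t)}{p_2(t)}.$$
This is formally the same situation as passing from column $k$ to column $k+1$ in Theorem \ref{MainThm1}, except that we multiply by $p_4(t)$ rather than by $tp_3(t)$. I would therefore redo the convolution computation (\ref{Conv2}) with the coefficients $z_i$ in place of $c_i$ and without the extra factor $t$.

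Write $S(t)=\sum s_it^i$ for the zeroth column of $\L_s$. Its periodic block is $C_0^{(s)}=(s_r,\ldots,s_{r+\pi-1})^T$, where $r=d_1+sd_4+1-d_2$. Note that $r\ge 0$ because $d_2\le d_1+1$. By Lemmas \ref{LPer} and \ref{FPSperiod}, the periodicity of $p_4S$ starts at $r+d_4$, and the least period is still $\pi$ because $\gcd(p_2,p_1p_4^{s+1})=1$. Then
$$[t^{r+d_4+m}]\,p_4(t)S(t)=\sum_{i=0}^{d_4} z_i s_{m+r+d_4-i}.$$
This is exactly the sum (\ref{DotPr2}). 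The same two cases $\pi\ge d_4+1$ and $\pi<d_4+1$, folding indices modulo $\pi$ via $s_{j+\pi}=s_j$ for $j\ge r$, give $C_0^{(s+1)}=\M_4\, C_0^{(s)}$. By induction on $s$, $C_0^{(k)}=\M_4^k C_0^{(0)}$.

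\emph{Step 2: the whole orbit.} Each layer $\L_k=(p_1p_4^k/p_2,\,tp_3)$ satisfies the hypotheses of Theorem \ref{MainThm1}: the numerator $p_1p_4^k$ is coprime to $p_2$, and its degree $d_1+kd_4\ge d_2-1$. The matrix $A=\c(\vec v_3)$ and the period $\pi$ depend only on $p_2$ and $p_3$, not on $k$. Hence
$$\O(\L_k)=\{A^jC_0^{(k)}\}=\{A^j\M_4^kC_0^{(0)}\}.$$

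\emph{Step 3: commuting the matrices.} All $\pi\times\pi$ circulant matrices are polynomials in $P$ by (\ref{FUNDcirc}), so they commute. Therefore
$$A^j\M_4^k C_0^{(0)}=\M_4^k A^jC_0^{(0)}$$
for every $j$, which is precisely the pointwise relation (\ref{State2}).

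The main obstacle is Step 1. It requires checking that the starting index of periodicity shifts by exactly $d_4$, with no factor $t$ contributing an extra $1$ as in the column case. It also requires a careful index bookkeeping in the folded case $\pi<d_4+1$, so that the vector (\ref{Vec1b}) appears with the correct alignment, namely that the first row of $\M_4$ pairs $z_{d_4-\pi i}$ with $s_r$. I would handle this by writing $p_4S$ as $\sum_i z_i t^i S(t)$, reducing every index $m+r+d_4-i$ modulo $\pi$ into the window $[r,r+\pi-1]$, and collecting coefficients, mirroring Case 2 of Theorem \ref{MainThm1}.
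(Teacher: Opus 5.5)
Your proposal is correct and follows essentially the same route as the paper. The paper likewise combines statement (ii) of Theorem~\ref{MainThm1} with the commutativity of circulant matrices to reduce to the zeroth columns, and then proves $C_{0,k}=\M_4\cdot C_{0,k-1}$ by induction on the layer index, using the convolution with the coefficients $z_j$ and periodicity starting at $kd_4+d_1+1-d_2$; the only difference is that you also sketch the folded case $\pi<d_4+1$, which the paper leaves to the reader by analogy with Case~2 of Theorem~\ref{MainThm1}.
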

%%%%%%%%   Proof of the main theorem    %%%%%%%%%%%%
\begin{proof}
Let $A$ denote the circulant matrix $\c(\vec{v})$ generated by the coefficients of $p_3(t)$, 
as in Theorem \ref{MainThm1}, and $C_{n,k}$ denote the periodic block in the 
$n$-th column of the $k$-th layer of $\Theta$. Suppose for a moment that the periodic 
block in the 0th column of the $k$-th layer is obtained from the corresponding block in the 
0th layer of $\Theta$, according to $C_{0,k} = \M_4^k\cdot C_{0,0}$. Then, using statement (ii) of 
Theorem \ref{MainThm1}, along with the commutativity of multiplication of circulant 
matrices, we have
$$
%\begin{equation}
%\label{CircCom1}
\M_4^k \cdot C_{n,0} = \M_4^k \cdot A^n\cdot C_{0,0} =   A^n \cdot \M_4^k \cdot C_{0,0}
= A^n\cdot C_{0,k} = C_{n,k}.
%\end{equation}
$$
Therefore, the orbit in the 0th layer is transformed into the 
orbit in the $k$-th layer via the matrix $\M_4^k$, preserving the order of the 
points (i.e., the columnwise order of the periodic blocks). In particular, it proves (\ref{State2}). 
Hence, to complete the proof, we must show that $C_{0,k} = \M_4^k\cdot C_{0,0}$. 
We do so for the case when $\pi \geq d_4 + 1$. The other case can be finished in the same 
way as in the analogous case in Theorem \ref{MainThm1}. 

According to the lemmas from Section 2, as well as the requirements on the polynomials 
$p_1(t),\,p_2(t)$, and $p_4(t)$, we have for any $k\geq 0$, 
$$
C_{0,k} = \begin{pmatrix}
[t^{(kd_4 + d_1 + 1) - d_2}]\left(\frac{p_1(t)p^k_4(t)}{p_2(t)}\right)\\
[t^{(kd_4 + d_1 + 1) - d_2 +1}]\left(\frac{p_1(t)p^k_4(t)}{p_2(t)}\right)\\ \vdots \\ 
[t^{(kd_4 + d_1 + 1) - d_2 + 1 +\pi-1}]\left(\frac{p_1(t)p^k_4(t)}{p_2(t)}\right)
\end{pmatrix},~\mbox{where} ~ 1 + d_1 - d_2 \geq 0.
$$

Let us use the induction on $k$, and since the base step when $k = 0$ is trivially true, 
we assume next that $k\geq 1$, $C_{0,k-1} = \M_4^{k-1}\cdot C_{0,0}$, and aim to show   
$C_{0,k} = \M_4\cdot C_{0,k-1}$. 
%Following notations from Theorem \ref{MainThm1}, 
%we denote the periodic block $C_{0,k-1}$ by 
%$$
%C_{0,k-1} = (s_r,\,s_{r+1},\, \ldots,\, s_{r + \pi -1})^T,~ 
%\mbox{where} ~ r = (k-1) d_4 + d_1 + 1 - d_2 \geq 0.
%$$
Writing the ratio 
$$
\frac{p_1(t)p_4^{k}(t)}{p_2(t)} = 
(z_0 + z_1t + \cdots + z_{d_4}t^{d_4})\frac{p_1(t)p_4^{k-1}(t)}{p_2(t)}
$$
as
%\begin{equation}
%\label{Rat22}
$$
\frac{p_1(t)p_4^{k}(t)}{p_2(t)} = 
(z_0 + z_1t + \cdots + z_{d_4}t^{d_4})\sum\limits_{i=0}^{\infty} s_it^i,
%\end{equation}
$$
and applying the properties of the operator $[t^n]$, we deduce that 
for each $m\in\{0,\ldots, \pi-1\}$, 
$$
[t^{(kd_4 + d_1 + 1) - d_2 + m}] \frac{p_1(t)p_4^{k}(t)}{p_2(t)}
$$
\begin{equation}
\label{Rat23}
= \sum\limits_{j = 0}^{d_4}z_j[t^{((k-1)d_4 + d_1 + 1) - d_2 + m - j}]\frac{p_1(t)p_4^{k-1}(t)}{p_2(t)}
\end{equation}
(recall that $\pi \geq d_4 + 1$). Using the shift operator $T$ from the definition of a 
circulant matrix, we see that equality (\ref{Rat23}) implies that the $m$-th term of 
the block $C_{0,k}$ equals the dot product of the vector
$$
T^m(z_{d_4},\,z_{d_4 -1},\,\ldots,\, z_{0},\,0,\,\ldots,\,0)
$$
with the column vector $C_{0,k-1}$. This proves that $C_{0,k} = \M_4\cdot C_{0,k-1}$.
\end{proof}

To illustrate this theorem, we show in Figure \ref{Orbit3} 
graphs of the orbits $\O(\L_0)$ and $\O(\L_1) = \M_4\cdot \O(\L_0)$ 
in the 3-D Riordan array from Figure 1. 
 
\begin{figure}[h] %  figure placement: here, top, bottom, or page
\centering
\includegraphics[width=63mm]{PicSF.pdf} ~~ \includegraphics[width=63mm]{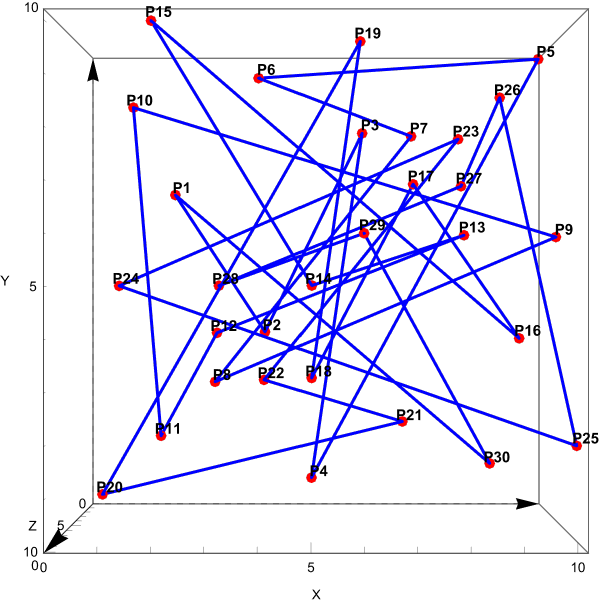} 
\caption{Two orbits in the layers of the 3-D Riordan array from Figure 1.}
\label{Orbit3}
\end{figure}

%%%%%%%%%%%%%%%%%%%%%%%%%%%
%%%%%%%%%%%%%%%%%%%%%%%%%%%
%%%%%%%%                      %%%%%%%%%%%%
%%%%%%%%   Section 5    %%%%%%%%%%%%
%%%%%%%%                      %%%%%%%%%%%%
%%%%%%%%%%%%%%%%%%%%%%%%%%%
%%%%%%%%%%%%%%%%%%%%%%%%%%%

\section{Preperiodic Column Partial Sums}

Let us begin this section with an example, and consider the Riordan array
\begin{equation}
\label{StExampS5}
\left(\frac{1}{1+t^2},\,t(1+t^3)\right) \in \F_3[[t]]\times \F_3[[t]].
\end{equation}
The corresponding coefficient matrix is as follows.
$$
\left(
\begin{BMAT}[3pt]{cccccccccc}{ccccccccccccc}
1 & 0 & 0 & 0 & 0 & 0 & 0 & 0 & 0  & \cdots \\
0 & 1 & 0 & 0 & 0 & 0 & 0 & 0 & 0  & \cdots \\
2 & 0 & 1 & 0 & 0 & 0 & 0 & 0 & 0  & \cdots \\
0 & 2 & 0 & 1 & 0 & 0 & 0 & 0 & 0  & \cdots \\
1 & 1 & 2 & 0 & 1 & 0 & 0 & 0 & 0  & \cdots \\
0 & 1 & 2 & 2 & 0 & 1 & 0 & 0 & 0  & \cdots \\
2 & 2 & 1 & 0 & 2 & 0 & 1 & 0 & 0  & \cdots \\
0 & 2 & 1 & 1 & 1 & 2 & 0 & 1 & 0  & \cdots \\
1 & 1 & 0 & 0 & 1 & 2 & 2 & 0 & 1  & \cdots \\
0 & 1 & 2 & 2 & 2 & 1 & 0 & 2 & 0  & \cdots \\
2 & 2 & 0 & 0 & 2 & 1 & 1 & 1 & 2  & \cdots \\
0 & 2 & 1 & 1 & 1 & 0 & 0 & 1 & 2  & \cdots \\
\vdots & \vdots & \vdots & \vdots & \vdots & \vdots & \vdots & \vdots & \vdots & \ddots
\addpath{(0,9,1)ruuuuldddd}
\addpath{(1,6,1)ruuuuldddd}
\addpath{(2,2,1)ruuuuldddd}
\end{BMAT} \right)
$$
The periodic block in the $k$-th column ($k\geq 0$) starts with the 
$(4k - 1)$-st term. If we add up all of the $4k-1$ terms above, we 
will see that the sequence of such partial sums modulo 3 will be also periodic, 
with the periodic block $\{0,1,0,0,1,1,1,2\}$. 

Periodic behavior of such partial sums in the columns of certain Riordan arrays 
was first noticed and studied by the third author in \cite{Krylov1} and \cite{Krylov2}. 
For example, considering over the reals the Riordan array $\bigl(1/(1 - t^3),\,tp(t)\bigr)$ 
with $p(t) = (2 - t + 2t^2)/3$, one notices that the sequence of such partial sums 
is not periodic, but for $p(t) = (-1 + 2t + 2t^2)/3$ it will be periodic with period 6 
(see \cite{Krylov1}, Tables 4 and 5). When we consider the Riordan arrays of 
polynomials over the finite fields, such sequences of preperiodic column sums 
will always be periodic. We prove it in Theorem \ref{PPSThm} below. 

Let us now define {\sl preperiodic column partial sums} 
of the Riordan array (\ref{RAp123}), recalling that the periodicity in the $k$-th 
column of (\ref{RAp123}) starts at the $\bigl((d_3 + 1)k+ d_1 - d_2 + 1\bigr)$-st term.

\begin{definition}
\label{DefPPS}
Let $p_i(t)\in\F_q[t],\,i\in\{1,\, 2,\,3\}$ be polynomials such that 
$p_i(0)\neq 0$, $\deg(p_i)=d_i$, and $p_2(t)$ is relatively prime to $p_1(t)$ 
and $p_3(t)$. Given the Riordan array $\left(\frac{p_1(t)}{p_2(t)},\, tp_3(t)\right)$, 
and the formal power series expansion 
$$
\frac{p_1(t)\bigl(tp_3(t)\bigr)^k}{p_2(t)} = \sum \limits_{i=0}^{\infty}s_{i,k} t^i,~k\geq 0,
$$ 
we define the {\sl $k$-th preperiodic partial sum} as  
\begin{equation}
\label{RACPS1}
S_{[k]} = \sum \limits_{i=0}^{(d_3 + 1)k+ d_1 - d_2} s_{i,k},
\end{equation}
when $(d_3 + 1)k+ d_1 - d_2 \geq 0$, and as $S_{[k]} = 0$ otherwise.
\end{definition}

Before stating the next Theorem \ref{PPSThm}, we notice that the column 
partial sums of any Riordan array coincide with the entries in the Riordan 
array, which is a product of $\bigl(1/(1 - t),\, t\bigr)$ with the original Riordan 
array. This is indeed clear, because the 
$n$-th partial sum $S_n$ of the $m$-th column can be presented as the dot
product of the infinite row vector $\{1,\ldots,1,0,0,\ldots\}$, which has 1 in the first 
$n$ positions and 0 after that, with the $m$-th column of the Riordan array 
(cf. Partial Sum Theorem 2.7 from \cite{Shapiro0}). Therefore, our sequence 
$\{S_{[k]}\}_{k\geq 0} ^{\infty}$ is a sequence of 
particular elements of the Riordan array 
\begin{equation}
\label{RAproduct1}
\left(\frac{1}{1-t},\, t\right) \cdot \left(\frac{p_1(t)}{p_2(t)},\, tp_3(t)\right) = 
\left(\frac{p_1(t)}{(1 - t)p_2(t)},\, tp_3(t)\right),
\end{equation}
and we can use the operator $[t^n]$ to obtain formulas 
for the elements of (\ref{RAproduct1}). 

\begin{theorem}
\label{PPSThm}
Consider the Riordan array 
$$
\left(\frac{p_1(t)}{p_2(t)},\, tp_3(t)\right) \in \F_q[[t]]\times \F_q[[t]],
$$
where each $p_i(t)\in\F_q[t]$ is a polynomial, such that 
$p_i(0)\neq 0$, $\deg(p_i)=d_i$, $d_2\leq d_1 + 1$, and $p_2(t)$ is 
relatively prime to $p_1(t)$ and $p_3(t)$. Then, the sequence of 
preperiodic partial sums $\{S_{[k]}\}_{k\geq 0}$ is (eventually) periodic.
\end{theorem}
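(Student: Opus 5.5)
The plan is to find a recursion that expresses $S_{[k+1]}$ through $S_{[k]}$ and the periodic block $C_k$ of the $k$-th column. By Theorem \ref{MainThm1}(ii), $C_{k+1}=A\cdot C_k$, so the pair $(S_{[k]},C_k)$ will evolve under a fixed map on the finite set $\F_q\times\F_q^{\pi}$. Eventual periodicity then follows from the pigeonhole principle.

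Write $N_k=(d_3+1)k+d_1-d_2$. Since $d_2\le d_1+1$, we have $N_k\ge -1$ for all $k\ge 0$, and $N_k\ge 0$ as soon as $k\ge 1$. With the convention that an empty sum is $0$, $S_{[k]}=\sum_{i=0}^{N_k}s_{i,k}$ holds uniformly, including the case $k=0$ with $N_0=-1$. By Theorem \ref{MainThm1}(i), the periodic part of column $k$ starts at index $N_k+1$. So $s_{N_k+1+m,k}$ equals the $(m \bmod \pi)$-th entry of $C_k$ for every $m\ge 0$. Because $p_3(0)\neq0$, the problem reduces to this periodic-part bookkeeping.

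The key computation uses the column relation coming from $p_1(tp_3)^{k+1}/p_2=tp_3(t)\cdot p_1(tp_3)^k/p_2$, namely
$$s_{i,k+1}=\sum_{j=0}^{d_3}c_j\,s_{i-1-j,k},$$
with $s_{l,k}=0$ for $l<0$. Summing over $0\le i\le N_{k+1}=N_k+d_3+1$ and exchanging the order of summation gives
$$S_{[k+1]}=\sum_{j=0}^{d_3}c_j\sum_{l=0}^{N_k+d_3-j}s_{l,k}
= p_3(1)\,S_{[k]}+\sum_{j=0}^{d_3}c_j\sum_{l=N_k+1}^{N_k+d_3-j}s_{l,k}.$$
Every index in the inner sums of the last term lies in the periodic part of column $k$. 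That term is therefore $L(C_k)$ for a fixed linear functional $L:\F_q^{\pi}\to\F_q$, independent of $k$. Concretely, the coefficient of the $m$-th entry of $C_k$ in $L$ is the sum of $c_j$ over all pairs $(j,u)$ with $1\le u\le d_3-j$ and $u-1\equiv m \pmod{\pi}$. Wrap-around occurs when $d_3\ge\pi$, and it is handled exactly as in Case 2 of Theorem \ref{MainThm1}. I expect this index bookkeeping, making sure every term beyond $N_k$ is genuinely periodic and correctly reduced mod $\pi$, to be the only delicate step. The hypothesis $d_2\le d_1+1$ together with Lemma \ref{FPSperiod} is exactly what guarantees this.

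Hence the map
$$\Phi(S,C)=\bigl(p_3(1)S+L(C),\,A C\bigr)$$
on the finite set $\F_q\times\F_q^{\pi}$ satisfies $(S_{[k+1]},C_{k+1})=\Phi(S_{[k]},C_k)$. The orbit of $(S_{[0]},C_0)$ under $\Phi$ must eventually repeat, so it is eventually periodic, and therefore so is its first coordinate $S_{[k]}$. As a remark, if $p_3(1)\ne0$ then $\Phi$ is invertible, because $A$ is. In that case the orbit is purely periodic with period dividing the order of $\Phi$, which is consistent with the purely periodic example (\ref{StExampS5}).
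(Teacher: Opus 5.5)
Your proof is correct, and it takes a genuinely different route from the paper's. The paper's proof of this theorem does not use the periodic blocks at all. It writes $S_{[k]}=[t^{D}]R_1(t)\bigl(p_3(t)/t^{d_3}\bigr)^k$ with $R_1=p_1/\bigl((1-t)p_2\bigr)$ and $D=d_1-d_2$. It then sums over $k$ to obtain $[t^{D}]H$ with $H=t^{d_3}R_1(t)/\bigl(t^{d_3}-p_3(t)x\bigr)$, argues that this is rational in $x$, and quotes Theorem~8.40 of Lidl and Niederreiter. You instead derive $S_{[k+1]}=p_3(1)S_{[k]}+L(C_k)$ from the one-step column relation, couple it with $C_{k+1}=AC_k$ from Theorem~\ref{MainThm1}(ii), and finish by pigeonhole.

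Your route is elementary, uses only results already proved, and yields more than the statement. $\Phi$ is linear, with a block upper-triangular matrix whose diagonal blocks are $p_3(1)$ and $A$. By Cayley--Hamilton, $\{S_{[k]}\}$ is therefore itself a linear recurring sequence, and $\sum_k S_{[k]}x^k=g(x)/\bigl((1-p_3(1)x)\det(I-xA)\bigr)$ for some polynomial $g$. That is exactly the rationality the paper is after, with an explicit denominator.

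Your route also avoids the weak point of the paper's argument. The series actually being summed, $R_1(t)\sum_k\bigl(p_3(t)/t^{d_3}\bigr)^kx^k$, lives in $\F_q((t))[[x]]$, and its $x^k$-coefficient has a pole of order $d_3k$ at $t=0$. The paper instead re-expands $H$ in $\F_q(x)[[t]]$, where $H\in t^{d_3}\F_q(x)[[t]]$. That expansion would force $[t^{D}]H=0$ whenever $D<d_3$, which is false: over $\F_3$, $p_1=1$, $p_2=1+t$, $p_3=1+2t$ give $D=-1$ but $S_{[1]}=1$. Once that step is repaired, the generating-function approach would give a closed form that bypasses the block machinery. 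Your recursion delivers the same rationality more cheaply and rigorously.

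A minor correction to your closing remark: $A$ need not be invertible on all of $\F_q^{\pi}$, so the paper's ``$A\in GL(\pi,\F_q)$'' is too strong. Over $\F_3$, take $p_2=1+t^2$ (so $\pi=4$) and $p_3=1+t$. Then $p_3(1)\neq0$, yet $A=I+P$ kills $(1,2,1,2)^T$. Pure periodicity when $p_3(1)\neq0$ still follows if you restrict $\Phi$ to $\F_q\times V$, where $V$ is the $A$-invariant space of blocks of solutions of (\ref{LRR1}). This space contains every $C_k$, and on it $A$ has eigenvalues $\beta^{-d_3}p_3(\beta)$, with $\beta$ ranging over the roots of $p_2$; these are nonzero because $\gcd(p_2,p_3)=1$. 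Also, the example (\ref{StExampS5}) has $d_2=d_1+2$, so it lies outside the theorem's hypotheses. None of this affects your main argument.
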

\begin{proof}
Since for each $k\geq 0$ the sum $S_{[k]}$ adds the first 
$(d_3 + 1)k+ d_1 - d_2 + 1$ terms, it follows from (\ref{RAproduct1}) that for each $k\geq 1$, 
$$
S_{[k]} = [t^{(d_3 + 1)k+ d_1 - d_2 }]\frac{p_1(t)\bigl(tp_3(t)\bigr)^k}{(1-t)p_2(t)} 
= [t^{d_3k+ d_1 - d_2 }]\frac{p_1(t)\bigl(p_3(t)\bigr)^k}{(1-t)p_2(t)}
$$
\begin{equation}
\label{ParSumCoef}
= [t^{d_1 - d_2 }]\left(\frac{p_1(t)}{(1-t)p_2(t)}\right)
\left(\frac{p_3(t)}{t^{d_3}}\right)^k.
\end{equation}
As mentioned in the introduction, Theorem 8.40 of \cite{FF} implies 
that to prove eventual periodicity of the formal power series
\begin{equation}
\label{LFPS1}
S(x) = \sum\limits_{k=0}^{\infty} S_{[k]}x^k,
\end{equation}
it is enough to show that this series can be written as a 
rational function over $\F_q$, that is $S(x) \in \F_q(x)$. If we introduce  
$R_1(t) = p_1(t)/\bigl((1-t)p_2(t)\bigr)$ and $D=d_1 - d_2$ for brevity, then 
$$
S(x) = \sum\limits_{k=0}^{\infty} [t^D]R_1(t)\left(\frac{p_3(t)}{t^{d_3}}\right)^k x^k
$$
\begin{equation}
\label{LFPS2}
= [t^D]R_1(t) \sum\limits_{k=0}^{\infty} \left(\frac{p_3(t)}{t^{d_3}}\right)^k x^k 
=[t^D]\frac{t^{d_3}R_1[t]}{t^{d_3} - p_3(t)x}.
\end{equation}
The last fraction in (\ref{LFPS2}) is a rational function in two variables, and 
taking a fixed coefficient in $t$ from the f.p.s. expansion of this function will give a 
rational function in $x$. Indeed, suppose we have 
$$
H(t,\,x) = \frac{P(t,\,x)}{Q(t,\,x)},~\mbox{where}~ P,\,Q\in \F_q[t,\,x],
$$
and consider $[t^D]H(t,\,x)$. We can expand $H(t,\,x)$ into a formal Laurent series in $t$ 
over the field of rational functions in $x$, $\F_q(x)$. Since in our case 
$$
Q(t,\,x) =(1-t)p_2(t)\bigl(t^{d_3} - p_3(t)x\bigr),
$$
we have $Q(0,\,x) = -p_2(0)p_3(0)x \neq 0$. Therefore the ratio 
$1/Q(t,\,x)$ has a well defined f.p.s. expansion in $t$ with 
coefficients in the field of rational 
functions $\F_q(x)$, that is, we can write 
\begin{equation}
\label{LFPS3}
H(t,\,x) = P(t,\,x)\sum\limits_{j\geq 0}f_j(x)t^j,
\end{equation}
where $f_j(x)\in \F_q(x)$ for all $j\ge 0$. Since $P(t,\,x)$ is a polynomial, 
the convolution rule of $[t^n]$ applied to the product in (\ref{LFPS3}), 
implies that $[t^D]H(t,\,x)$ will be a finite sum of rational functions, and 
thus a rational function itself. 
\end{proof}

\begin{note}
The only place where we used the restriction $d_2\leq d_1 +1$, 
was the index of the first term in the column periodic blocks. 
The proof works equally well for any fixed integer $d_1 - d_2$, regardless of 
the inequality, provided that the index of the first term depends linearly on the 
column index $k$. Moreover, the GCD restriction seems to be also removable; 
however, to obtain precise formulas similar to \eqref{LFPS1} and \eqref{LFPS2}, 
one must first understand how exactly the starting term in each periodic block 
depends on the degrees of $p_i(t)$ (recall our Example \ref{Example5}).
\end{note}

Having established that the sequence of preperiodic column partial sums 
is eventually periodic, the next step is to characterize their periodic properties.
We close our study here with a particular family of Riordan arrays, 
for which such sequences are trivially constant. As a motivating example, 
consider the Riordan array over $\mathbb{F}_3$,
$$ 
\left( \frac{1}{1+t},\, t(1+t^4)\right),
$$
with the coefficient matrix
$$
\left(
\begin{BMAT}[3pt]{cccccccccc}{ccccccccccccc}
1 & 0 & 0 & 0 & 0 & 0 & 0 & 0 & 0  & \cdots \\
2 & 1 & 0 & 0 & 0 & 0 & 0 & 0 & 0  & \cdots \\
1 & 2 & 1 & 0 & 0 & 0 & 0 & 0 & 0  & \cdots \\
2 & 1 & 2 & 1 & 0 & 0 & 0 & 0 & 0  & \cdots \\
1 & 2 & 1 & 2 & 1 & 0 & 0 & 0 & 0  & \cdots \\
2 & 2 & 2 & 1 & 2 & 1 & 0 & 0 & 0  & \cdots \\
1 & 1 & 0 & 2 & 1 & 2 & 1 & 0 & 0  & \cdots \\
2 & 2 & 0 & 1 & 2 & 1 & 2 & 1 & 0  & \cdots \\
1 & 1 & 0 & 2 & 2 & 2 & 1 & 2 & 1  & \cdots \\
2 & 2 & 0 & 1 & 1 & 0 & 2 & 1 & 2  & \cdots \\
1 & 1 & 1 & 2 & 2 & 0 & 1 & 2 & 1  & \cdots \\
2 & 2 & 2 & 1 & 1 & 0 & 2 & 2 & 2  & \cdots \\
\vdots & \vdots & \vdots & \vdots & \vdots & \vdots & \vdots & \vdots & \vdots & \ddots
\addpath{(0,11,1)ruuldd}
\addpath{(1,6,1)ruuldd}
\addpath{(2,1,1)ruuldd}
\end{BMAT} \right).
$$

Based on the Definition \ref{DefPPS}, for the preperiodic partial 
sums here, we add the first $5k$ terms of $k$-th column. 
From the matrix, it is clear that the first $5k$ terms of $S_{[k]}$ for 
$k\in\{0,1,2\}$ are all equal to 0, and the same holds true for every 
$k\geq 0$. It will follow from our next result, where we 
generalize this example and show that all preperiodic 
column partial sums in the corresponding Riordan array 
over $\F_q$ vanish. 

\begin{corollary}
Take a finite field $\F_q$ of odd characteristic $p$, and 
a polynomial $p_3(t) = \sum\limits_{j = 0}^{d_3}c_jt^{2j}\in\F_q[t]$ of degree $2d_3$,  
such that $c_0\neq 0$ and $p\nmid \sum\limits_{j=0}^{d_3} c_j$.
Then the preperiodic partial sums in the columns of the Riordan array 
\begin{equation}
\label{LastRA1}
\left(\frac{1}{1+t},\,tp_3(t)\right)
\end{equation}
equal 0 modulo $p$, that is, $S_{[k]}\equiv 0\pmod{p}$ for all $k\geq 0$.
\end{corollary}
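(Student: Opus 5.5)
The plan is to compute $S_{[k]}$ directly through the product formula (\ref{RAproduct1}), as in the first step of the proof of Theorem \ref{PPSThm}, and then to exploit a parity (evenness) argument. First I check that the Riordan array (\ref{LastRA1}) satisfies the hypotheses of Definition \ref{DefPPS}. Here $p_1(t)=1$ and $p_2(t)=1+t$, so $d_1=0$ and $d_2=1$, and $p_3$ has degree $2d_3$. Coprimality of $p_2$ with $p_1$ is trivial. For $p_3$, coprimality with $1+t$ amounts to $p_3(-1)\neq 0$. Since $p_3$ involves only even powers of $t$, $p_3(-1)=\sum_j c_j\neq 0$ in $\F_q$, and this is exactly the hypothesis $p\nmid\sum_j c_j$. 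Also $p_i(0)\neq 0$ because $c_0\neq 0$. Note that the degree of $p_3$ is $2d_3$, so in Definition \ref{DefPPS} the role of "$d_3$" is played by $2d_3$.

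Next, the upper summation index in (\ref{RACPS1}) is $(2d_3+1)k+0-1=(2d_3+1)k-1$. For $k=0$ this is negative, so $S_{[0]}=0$ by definition. For $k\geq 1$, (\ref{RAproduct1}) gives
$$
S_{[k]}=[t^{(2d_3+1)k-1}]\frac{t^k p_3(t)^k}{(1-t)(1+t)}=[t^{2d_3k-1}]\frac{p_3(t)^k}{1-t^2}.
$$
The key observation is that $p_3(t)=Q_1(t^2)$ with $Q_1(u)=\sum_j c_ju^j$. Hence $p_3(t)^k/(1-t^2)=Q_1(t^2)^k\sum_{i\ge0}t^{2i}$ is a formal power series in $t^2$ alone. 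All of its odd-index coefficients vanish, and the index $2d_3k-1$ is odd, so $S_{[k]}=0$ in $\F_q$, i.e. $S_{[k]}\equiv 0 \pmod p$.

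I expect no real obstacle; the only care needed is bookkeeping. One must match the paper's convention that the degree of $p_3$ is $2d_3$ when reading off the starting index of the periodic block. One must also remember that the factor $1/(1-t)$ from the partial-sum array combines with $1/(1+t)$ to give $1/(1-t^2)$; this identity holds in any characteristic. The odd-characteristic hypothesis is then used only to keep $1+t$ and $1-t$ distinct, so that the Riordan array and its period behave as in Theorem \ref{MainThm1}, while the divisibility hypothesis serves only to guarantee the coprimality assumption.
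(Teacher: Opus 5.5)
Your proof is correct and follows the paper's argument. Both proofs get coprimality from $p_3(-1)=\sum_j c_j\neq 0$, then use the partial-sum array (\ref{RAproduct1}) to write $S_{[k]}$ as an odd-index coefficient of the even series $p_3(t)^k/(1-t^2)$, which therefore vanishes; the paper phrases this as $[t^{-1}]$ of a Laurent series, you as $[t^{2d_3k-1}]$ of a power series, which is the same thing.

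One correction to your closing remark: the odd-characteristic hypothesis is not used to keep $1\pm t$ distinct. It is simply superfluous, because in characteristic $2$ one still has $(1-t)(1+t)=1+t^2$, an even series, and your parity argument goes through unchanged.
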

\begin{proof}
First, we note that for any positive integer $d_3$, the polynomials $p_3(t)$ 
and $1 + t$ are relatively prime over $\F_q$. It follows immediately from the 
observation that $p-1\equiv -1\pmod{p}$ is not a root of $p_3(t)$, since, 
according to the assumptions, $p$ 
does not divide the sum $c_0 + \cdots + c_{d_3}$. For the 
Riordan array (\ref{LastRA1}), 
we have $D = d_1 - d_2 = -1$, and \ref{ParSumCoef}) gives 
\begin{equation}
\label{LastPS2}
S_{[k]} = [t^{-1}]\left(\frac{1}{1 - t^2}\right)
\left(\frac{p_3(t)}{t^{2d_3}}\right)^k = 
[t^{-1}]\left(\frac{p_3(t)}{t^{2d_3}}\right)^k\sum\limits_{j\geq 0}t^{2j}.
\end{equation}
Since $p_3(t)$ has no odd-degree terms, the coefficient of $t^{-1}$ in 
the Laurent formal power series expansion in (\ref{LastPS2}) must be zero, 
which completes the proof. 
\end{proof}

\end{document}